\newcommand{\rt}{\mathnormal{\mathsf{RT}}}
\newcommand{\hind}{\mathnormal{\mathsf{HT}}}
\newcommand{\zfa}{\mathnormal{\mathsf{ZFA}}}
\newcommand{\zf}{\mathnormal{\mathsf{ZF}}}
\newcommand{\zfc}{\mathnormal{\mathsf{ZFC}}}
\newcommand{\ac}{\mathnormal{\mathsf{AC}}}
\newcommand{\fdf}{\mathnormal{\mathrm{Fin}=\mathrm{D}\text{-}\mathrm{Fin}}}
\newcommand{\dc}{\mathnormal{\mathsf{DC}}}
\newcommand{\bpi}{\mathnormal{\mathsf{BPI}}}
\newcommand{\oep}{\mathnormal{\mathsf{OEP}}}
\newcommand{\op}{\mathnormal{\mathsf{OP}}}
\newcommand{\kl}{\mathnormal{\mathsf{KL}}}
\newcommand{\kw}{\mathnormal{\mathsf{KW}}}
\newcommand{\cc}{\mathnormal{\mathsf{CC}}}
\DeclareMathOperator{\fu}{FU}
\DeclareMathOperator{\fs}{FS}
\DeclareMathOperator{\dom}{dom}
\DeclareMathOperator{\ran}{ran}
\DeclareMathOperator{\sym}{Sym}
\newtheorem{theorem}{Theorem}[section]
\newtheorem{proposition}{Proposition}[section]
\newtheorem{lemma}{Lemma}[section]
\newtheorem{corollary}{Corollary}[section]
\newtheorem{question}{Question}[section]
\theoremstyle{definition}
\newtheorem{definition}{Definition}[section]
\theoremstyle{remark}
\author[D. Fern\'andez]{David Fern\'andez-Bret\'on}
\address{
Escuela Superior de F\'{\i}sica y Matem\'aticas\\
Instituto Polit\'ecnico Nacional\\
Av. Instituto Polit\'ecnico Nacional s/n Edificio 9, 
Col. San Pedro Zacatenco, Alcald\'{\i}a Gustavo A. Madero, 07738, CDMX, Mexico. 
}
\email{dfernandezb@ipn.mx}
\urladdr{https://dfernandezb.web.app}
\title[Hindman's Theorem and Weak Choice]{Hindman's theorem in the \\ hierarchy of choice principles}
\begin{document}

\maketitle

\begin{abstract}
In the context of $\zf$, we analyze a version of Hindman's finite unions theorem on infinite sets, which normally requires the Axiom of Choice to be proved. We establish the implication relations between this statement and various classical weak choice principles, thus precisely locating the strength of the statement as a weak form of the $\ac$.
\end{abstract}

\section{Introduction}

One of the central results of infinitary (countable) Ramsey theory is the so-called Hindman's finite sums theorem~\cite{hindman-thm}, stating that for every finite partition of $\mathbb N$ it is possible to find elements $x_1<\cdots<x_n<\cdots$ such that all sums of finitely many of the $x_i$, with no repetitions, are contained in the same cell of the partition. An extremely close result in a similar vein, which was in fact already known to be equivalent to Hindman's finite sums theorem before the latter was proved, is the statement that for every partition of the set $[\mathbb N]^{<\omega}$ of all finite subsets of $\mathbb N$, one can find infinitely many pairwise disjoint sets such that all unions of finitely many of them are contained within the same cell of the partition. Upon replacing $\mathbb N$ with an arbitrary set $X$ in the latter result, one obtains a statement that, while provable in $\zfc$, may potentially not be a theorem of $\zf$. This statement is what we will refer to as {\it Hindman's theorem} in this paper, and it will be our central object of study.

\begin{definition}\label{def:ht}
{\em Hindman's theorem}, denoted $\hind$, is the statement that, for every infinite set $X$ and for every colouring $c:[X]^{<\omega}\longrightarrow 2$ of the finite powerset of $X$ with two colours, there exists an infinite, pairwise disjoint family $Y\subseteq[X]^{<\omega}$ such that the set
\begin{equation*}
\fu(Y)=\left\{\bigcup_{y\in F}y\bigg| F\in[Y]^{<\omega}\setminus\{\varnothing\}\right\}
\end{equation*}
is $c$-monochromatic.
\end{definition}

(We prove later, in Proposition~\ref{colourblind-hindman}, that we obtain an equivalent statement, modulo $\zf$, if we vary the number of colours in the colouring, so long as said number remains finite.) It follows from Hindman's finite unions theorem over $\mathbb N$ that $\hind$ is a theorem of $\zfc$ (by simply embedding $\mathbb N$ into any infinite set $X$ and restricting any colouring of $[X]^{<\omega}$); however, it turns out that one cannot prove $\hind$ in $\zf$ only. Hence, one can think of the statement $\hind$ as a weak form of the Axiom of Choice, and it thus makes sense to try and compare this choice principle with other classical choice principles that have been extensively studied, investigating the implication relations (modulo $\zf$) that there are between them. It is worth noting that $\hind$ is a very natural choice principle not only due to its origins in Ramsey theory, but also in light of some results presented in this paper, e.g., Proposition~\ref{konigplushindman}, stating that the conjunction of $\hind$ and K\"onig's Lemma is equivalent to the statement that every infinite set is Dedekind-infinite (and therefore, the latter is also equivalent to the conjunction of $\hind$ and Ramsey's theorem). It is also worth noting that $\hind$ is equivalent to a statement that simply deals with the Dedekind-finiteness of finite powersets of sets (see Proposition~\ref{equiv-hindman} below), suggesting that the algebraic and topological dynamics aspects of Hindman's Theorem are not particularly relevant from the point of view of choice principles.

\begin{figure}
\begin{center}
\begin{tabular}{c}
\xymatrix{
 & & \ac\ar@{=>}[dl]\ar@{=>}[ddr]\ar@{=>}[d] & \\
 & \dc\ar@{=>}[d] & \bpi\ar@{=>}[dd] & \\
 & \cc\ar@{=>}[d] & & \kw\ar@{=>}[dd] \\
 & \fdf\ar@{=>}[dl]\ar@{=>}[dr] & \oep\ar@{=>}[dr] & \\
\hind\ar@{=>}[dr] & & \rt\ar@{=>}[d] & \op\ar@{=>}[dl] \\
 & \text{Form 82}\ar@{=>}[d] & \kl\ar@{=>}[dl] & \\
 & \varnothing & & 
}
\end{tabular}
\end{center}
\caption{Implications between $\hind$ and other classical choice principles.}
\label{fig:diagram}
\end{figure}
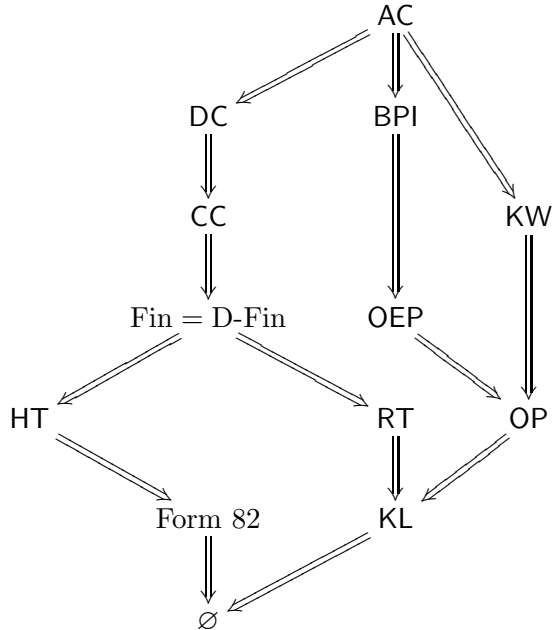

In this paper, we locate the precise strength of $\hind$ among the most important classical choice principles\footnote{Further work of E. Tachtsis~\cite{tachtsis-hindman} has established some equivalences between $\hind$ and other previously known weak choice principles.}. The choice principles considered are, in addition to the Axiom of Choice, the principle of Countable Choice, the axiom of Dependent Choice, K\"onig's Lemma, the principle that every Dedekind-finite set is finite, the Boolean Prime Ideal theorem, the Kinna--Wagner selection principle, the Ordering Principle, the Order Extension Principle, Ramsey's Theorem, and Form 82 from~\cite{howard-rubin} (the latter is not as classical as the other ones, but we include it in our study due to its high degree of similarity with a certain equivalence of $\hind$). The results we obtain are summarized in the
diagram from Fig.~\ref{fig:diagram}, which contains all possible ZF-provable implications between $\hind$ and the aforementioned choice principles, each of which is represented by the obvious abbreviation in the diagram. Formal definitions of each of the choice principles considered are to be found in Sec.~\ref{zf-results}.

The reader will note that the diagram from Fig.~\ref{fig:diagram} contains very few implication arrows to and from $\hind$ (the only ones are $\fdf\Rightarrow\hind$ and $\hind\Rightarrow\text{Form 82}$, both of which will be obvious, given the equivalence of $\hind$ established in Proposition~\ref{equiv-hindman}, once we state the meaning of the involved choice principles). Therefore, we must emphasize that the main body of work presented in this paper is not proofs of implications in $\zf$, but rather {\it independence proofs}, showing that there are no further implications between $\hind$ and any other of the principles mentioned. In other words, the most meaningful information that can be gathered from the diagram in Fig.~\ref{fig:diagram} is not the arrows shown, but rather the ones {\it not} shown, signalling that an independence proof (or an argument stemming from a previous independence proof) has been established formally. As such, most of the content of this paper deals either with symmetric models, or with Fraenkel--Mostowkski permutation models (which yield symmetric models after applying well-known transfer theorems), thus obtaining models of $\zf$ witnessing the unprovability of the relevant statement. In Sec.~\ref{zf-results} we relay a few basic $\zf$ results; afterwards in Sec.~\ref{consistency-results} we discuss Fraenkel--Mostowski permutation models and the transfer theorems that allow us to obtain $\zf$ models from them, and proceed to determine whether $\hind$ holds in various of these models. With this information in hand, we establish all the independence results required to complete the diagram in Fig.~\ref{fig:diagram}, except for the reversibility of the implication $\hind\Rightarrow\text{Form 82}$; this is addressed in Sec.~\ref{ht-vs-form82}, which contains the proof that this implication is not reversible (and the proof is involved enough that it warrants its own section). Finally, in Sec.~\ref{section-boolean}, we consider a weaker Boolean version of $\hind$ and also determine its place within the hierarchy of choice principles (see the enhanced diagram from Fig.~\ref{fig:enhanced-diagram}).

\section{Some basic results}\label{zf-results}

We begin by establishing that, in our definition of $\hind$, we could have considered colourings on any finite number of colours and still obtained an equivalent statement modulo $\zf$. Therefore, we will temporarily use the symbol $\hind(k)$ (where $k\in\mathbb N\setminus\{1\}$) to denote that, for every infinite set $X$ and every colouring $c:[X]^{<\omega}\longrightarrow k$, there exists an infinite, pairwise disjoint $Y\subseteq[X]^{<\omega}$ such that $\fu(Y)$ is $c$-monochromatic. Hence, $\hind(2)$ is exactly what we called $\hind$ in Definition~\ref{def:ht}; after the following proposition, we will be able to drop the parameter $k$ and simply write $\hind$ in all cases.

\begin{proposition}\label{colourblind-hindman}
All of the statements $\hind(k)$, as $k\in\mathbb N\setminus\{1\}$ varies, are equivalent under $\zf$.
\end{proposition}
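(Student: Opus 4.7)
The plan is to establish the equivalence $\hind(k) \Leftrightarrow \hind(2)$ for each $k \geq 2$. One direction is essentially trivial: for any $2 \leq j \leq k$, any $j$-colouring $c : [X]^{<\omega} \to j$ is also a $k$-colouring (since $j \subseteq k$ as natural numbers), so $\hind(k)$ produces a $\fu(Y)$ that is monochromatic with some colour that automatically lies in $j$, witnessing $\hind(j)$; in particular this gives $\hind(k) \Rightarrow \hind(2)$.

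For the converse, I would prove $\hind(2) \Rightarrow \hind(k)$ for every $k \geq 2$ by induction on $k$, the base case being vacuous. For the inductive step, given a colouring $c : [X]^{<\omega} \to k$, I would first collapse the colours $1,\dots,k-1$ into one by defining $c_1(s) = 0$ if $c(s) = 0$ and $c_1(s) = 1$ otherwise, and then apply $\hind(2)$ to obtain an infinite pairwise disjoint $Y \subseteq [X]^{<\omega}$ with $\fu(Y)$ being $c_1$-monochromatic. If the monochromatic value is $0$ we are done immediately; otherwise $c$ restricted to $\fu(Y)$ takes at most $k-1$ distinct values.

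To apply the inductive hypothesis, the key step is to view $\fu(Y)$ as itself a finite powerset. After harmlessly discarding $\varnothing$ from $Y$ if it happens to lie there, the elements of $Y$ are pairwise disjoint and nonempty, so the map $F \mapsto \bigcup F$ is a $\zf$-definable bijection from $[Y]^{<\omega}\setminus\{\varnothing\}$ onto $\fu(Y)$. Pulling $c$ back along this bijection (and setting the value on $\varnothing$ arbitrarily) yields, up to relabelling, a $(k-1)$-colouring $\tilde c$ of $[Y]^{<\omega}$; the inductive hypothesis gives $\hind(k-1)$, producing an infinite pairwise disjoint $Z \subseteq [Y]^{<\omega}$ with $\fu(Z)$ $\tilde c$-monochromatic. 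Setting $Z' := \{\bigcup z : z \in Z\}$, one then checks that $Z'$ is an infinite pairwise disjoint family in $[X]^{<\omega}$ and that $\fu(Z') = \{\bigcup F : F \in \fu(Z)\}$ is $c$-monochromatic, completing the induction. The only mildly delicate point---the main obstacle, if there is one---will be verifying that none of these constructions (the colour collapse, the bijection, the passage from $Z$ to $Z'$) covertly invokes the axiom of choice; since each is uniformly definable from the data at hand, the entire argument is carried out within $\zf$.
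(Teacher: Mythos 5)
Your proposal is correct and is essentially the paper's argument in mirror image: both merge the colours into two groups, apply one instance of Hindman's statement to the coarse colouring, and then apply the other instance to the colouring induced on $[Y]^{<\omega}$ via the bijection $F\mapsto\bigcup F$ onto $\fu(Y)$ (the paper inducts upward via $\hind(k)\Rightarrow\hind(k+1)$, applying $\hind(k)$ first and $\hind(2)$ second, whereas you apply $\hind(2)$ first and the inductive $\hind(k-1)$ second). The only detail to add---one the paper elides in exactly the same way---is to discard $\varnothing$ from $Z$ as well as from $Y$ before forming $Z'$, so that $\fu(Z')$ does not contain $\varnothing$, whose $c$-colour is unconstrained.
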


\begin{proof}
Since $k$-colourings are always also $k'$-colourings whenever $k\leq k'$, we have that $\hind(k')\Rightarrow\hind(k)$ under these circumstances. Now to finish the proof, we need only show that $\hind(k)\Rightarrow\hind(k+1)$ for $k\geq 2$ (which yields an argument by induction). So suppose that $k\geq 2$ and that $\hind(k)$ holds. Let $X$ be an infinite set and let $c:[X]^{<\omega}\longrightarrow k+1$ be a colouring. Define another colouring $d:[X]^{<\omega}\longrightarrow k$ by letting $d(x)=\min\{c(x),k-1\}$. Using $\hind(k)$ we obtain an infinite pairwise disjoint family $Y\subseteq [X]^{<\omega}$ such that $\fu(Y)$ is $d$-monochromatic, say on colour $i<k$. If $i<k-1$ then $\fu(Y)$ is $c$-monochromatic as well (on the same colour) and we are done; otherwise we know that for every $y\in\fu(Y)$, $c(y)\in\{k-1,k\}$. Hence, we can define yet another colouring $e:[Y]^{<\omega}\longrightarrow 2$ given by $e(F)=k-c\left(\bigcup_{y\in F}y\right)$ and use $\hind(2)$ to obtain an infinite, pairwise disjoint family $W\subseteq[Y]^{<\omega}$ such that $\fu(W)$ is $e$-monochromatic, say in colour $j<2$. This means that, for every $\mathcal F\in[W]^{<\omega}$, 
\begin{equation*}
j=e\left(\bigcup_{F\in\mathcal F}F\right)=k-c\left(\bigcup_{y\in\bigcup_{F\in\mathcal F}F}y\right),
\end{equation*}
so that, if we define
\begin{equation*}
Z=\left\{\bigcup_{y\in F}y\bigg|F\in W\right\},
\end{equation*}
then $Z\subseteq\fu(Y)\subseteq[X]^{<\omega}$ is an infinite, pairwise disjoint family such that $\fu(Z)$ is monochromatic for $c$ (in colour $k-j$), and we are done.
\end{proof}

The ``classical'' choice principles considered in this paper, in addition to $\hind$, are the following:

\begin{enumerate}
\item The {\em Axiom of Dependent Choice}, abbreviated $\dc$, is the statement that, for every set $X$ equipped with a relation $R\subseteq X\times X$ such that $(\forall x\in X)(\exists y\in X)(x\mathrel{R}y)$, there exists a countable sequence $\langle x_n\big|n<\omega\rangle$ such that $(\forall n<\omega)(x_n\mathrel{R}x_{n+1})$ (this statement is labelled Form 43 in~\cite{howard-rubin}).
\item The {\em Axiom of Countable Choice}, which we will abbreviate $\cc$, is the statement that every countable family of nonempty sets admits a choice function (Form 8 from~\cite{howard-rubin}).
\item The statement ``every infinite set is Dedekind-infinite'' will be denoted by $\fdf$ (Form 9 in~\cite{howard-rubin}).
\item {\em Ramsey's theorem}, denoted by $\rt$, is the statement that for every infinite set $X$ and for every colouring $c:[X]^2\longrightarrow 2$, there exists an infinite $Y\subseteq X$ such that $[Y]^2$ is $c$-monochromatic (Form 17 from~\cite{howard-rubin}).
\item {\em K\"onig's lemma}, which we will abbreviate $\kl$, is the statement that every countable family of nonempty finite sets admits a choice function (Form 10 in~\cite{howard-rubin}).
\item The {\em Boolean Prime Ideal theorem}, denoted by $\bpi$, is the statement that every Boolean algebra carries a prime ideal (Form 14 from~\cite{howard-rubin}).
\item The {\em Kinna--Wagner selection principle}, which will be abbreviated by $\kw$, is the statement that\footnote{Equivalently, for every family of sets $\mathscr F$ all of which have at least two elements, there is a function $f$ with domain $\mathscr F$ such that $(\forall S\in\mathscr F)(\varnothing\neq f(S)\subsetneq S)$, see~\cite[Problem 4.12]{jech-choice}. Sufficiently old papers refer to the Kinna--Wagner selection principle simply as the {\em selection principle}.} for every set $X$ there exists an ordinal number $\alpha$ and an injective function $f:X\longrightarrow\wp(\alpha)$ (Form 15 in~\cite{howard-rubin}).
\item The {\em Ordering Principle}, denoted by $\op$, is the statement that every set can be linearly ordered (Form 30 from~\cite{howard-rubin}).
\item The {\em Order Extension Principle}, abbreviated $\oep$, is the statement that every partial order can be extended to a linear order on the same set (Form 49 in~\cite{howard-rubin}).
\item Form 82 (according to the numbering in~\cite{howard-rubin}) is the statement that for every infinite set $X$, its powerset $\wp(X)$ is Dedekind-infinite.
\end{enumerate}

Recall that a set is said to be {\em Dedekind-infinite} if $\omega$ injects into it (equivalently, if there exists an injective, but not surjective, function of the set into itself), and a set is {\em Dedekind-finite} if it is not Dedekind-infinite. It is hard not to see that every Dedekind-infinite set must be infinite; however, the converse to this statement is not provable in $\zf$, and is therefore considered a choice principle. K\"onig's Lemma owes its name to the fact that it is equivalent, over $\zf$, to the classical theorem about finitely branching infinite trees due to K\"onig (that is, the statement that every finitely branching infinite tree must have an infinite branch). Well-known classical results in choiceless set theory establish that, over $\zf$, $\dc$ implies $\cc$ which in turn implies $\fdf$; $\bpi$ implies $\oep$, and either $\oep$ or $\kw$ implies $\op$ which in turn implies $\kl$. 
Moreover, $\fdf$ implies both Form 82 and $\rt$, and the latter in turn implies $\kl$. Furthermore, none of the implications mentioned in this paragraph is reversible, and there are no further implication relations between any of the choice principles mentioned in this paragraph (see, e.g.,~\cite{howard-rubin} for a complete set of references on all the facts just mentioned).

We now begin to analyse the strength of $\hind$ among all of these principles. Important information can be gathered by ``locally'' studying  those sets for which Hindman's finite unions theorem holds (as opposed to the ``global'' principle that every infinite set satisfies Hindman's theorem). Such a careful study was performed in~\cite{brot-cao-fernandez}, where the following definition is stated. 

\begin{definition}[\cite{brot-cao-fernandez}, Definition 3.6 (3), cf. Definition 3.1]
A set $X$ will be called {\em H-finite} if there exists a colouring $c:[X]^{<\omega}\longrightarrow 2$ such that for no infinite, pairwise disjoint $Y\subseteq[X]^{<\omega}$ can the set $\fu(Y)$ be $c$-monochromatic. We will say that $X$ is {\em H-infinite} if it is not H-finite (so $X$ is H-infinite if and only if Hindman's finite unions theorem holds at $X$).
\end{definition}

Thus, $\hind$ is simply the statement that every infinite set must be H-infinite. Hence, it follows from, e.g.,~\cite[Proposition 4.2]{brot-cao-fernandez} that $\hind$ is not provable in $\zf$ alone. We can get much more precise information after establishing the following equivalence of $\hind$.

\begin{proposition}\label{equiv-hindman}
In $\zf$, the statement $\hind$ is equivalent to the statement that for every infinite set $X$, its finite powerset $[X]^{<\omega}$ is Dedekind-infinite.
\end{proposition}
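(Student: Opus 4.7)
The plan is to prove the two implications separately, using classical results on $\omega$ for the converse and a clever colouring based on $2$-adic valuations for the forward direction. For the converse ($\Leftarrow$), assume $[X]^{<\omega}$ is Dedekind-infinite for every infinite $X$, and fix such an $X$ with a colouring $c:[X]^{<\omega}\to 2$. Using Dedekind-infiniteness, extract a countable family of distinct elements $a_0,a_1,\dots\in[X]^{<\omega}$; then carve out a pairwise disjoint countable subfamily $b_0,b_1,\dots$ by the standard recursion, letting $b_n=a_{k_n}\setminus\bigcup_{i<n}b_i$ for the least $k_n$ with $a_{k_n}\not\subseteq\bigcup_{i<n}b_i$ (such an index exists, for otherwise the $a_k$ would all lie in the finite power set of $\bigcup_{i<n}b_i$). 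Transport $c$ to $\tilde c:[\omega]^{<\omega}\to 2$ via $\tilde c(F)=c\bigl(\bigcup_{n\in F}b_n\bigr)$, invoke the classical Hindman finite unions theorem on $\omega$ (a theorem of $\zf$, since $\omega$ is well-orderable) to obtain an infinite pairwise disjoint $\mathcal F\subseteq[\omega]^{<\omega}$ with $\fu(\mathcal F)$ $\tilde c$-monochromatic, and pull back to $Y=\{\bigcup_{n\in F}b_n:F\in\mathcal F\}$; pairwise disjointness of the $b_n$ makes the pull-back injective and transfers monochromaticity from $\fu(\mathcal F)$ to $\fu(Y)$.

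For the forward direction ($\Rightarrow$), assume $\hind$ and let $X$ be infinite. The key move is to select a clever $2$-colouring of $[X]^{<\omega}$: let $c(\varnothing)=0$ and $c(F)=v_2(|F|)\bmod 2$ for $F\neq\varnothing$, where $v_2(n)$ denotes the $2$-adic valuation. Applying $\hind$ yields an infinite pairwise disjoint $Y\subseteq[X]^{<\omega}$ with $\fu(Y)$ $c$-monochromatic; after removing $\varnothing$ if necessary, assume every $y\in Y$ is nonempty. The main technical step is to show that the fibres of $\varphi:Y\to\omega$, $y\mapsto v_2(|y|)$, have cardinality at most $2$. Given three distinct $y_1,y_2,y_3$ with $\varphi(y_i)=a$, write $|y_i|=2^a o_i$ with $o_i$ odd; since $|y_i|+|y_j|=2^a(o_i+o_j)$ and $o_i+o_j$ is even, monochromaticity forces $v_2(o_i+o_j)\geq 2$, i.e.\ $o_i+o_j\equiv 0\pmod 4$. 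The three resulting congruences together imply $2o_2\equiv 0\pmod 4$, contradicting the oddness of $o_2$.

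With fibres bounded by $2$ and $Y$ infinite, the image $V=\varphi(Y)\subseteq\omega$ is infinite; enumerate $V=\{k_0<k_1<\cdots\}$ and define $f:\omega\to[X]^{<\omega}$ by $f(i)=\bigcup\{y\in Y:\varphi(y)=k_i\}$, which is a well-defined nonempty finite subset of $X$ (the union of at most two pairwise disjoint nonempty finite sets). Injectivity is immediate: if $f(i)=f(j)$ for $i\neq j$, then the fibres $\varphi^{-1}(k_i)$ and $\varphi^{-1}(k_j)$ are disjoint subsets of $Y$, so by pairwise disjointness of $Y$ their unions are disjoint subsets of $X$, forcing the common value to be empty, which is impossible. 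This produces the desired injection $\omega\hookrightarrow[X]^{<\omega}$, so $[X]^{<\omega}$ is Dedekind-infinite. The main obstacle is the fibre-bound computation in the forward direction; the remaining steps are clean choice-free constructions.
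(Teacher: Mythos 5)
Your argument is correct, but it takes a genuinely different route from the paper: the paper proves Proposition~\ref{equiv-hindman} in one line, by quoting Theorem~3.2 of the Brot--Cao--Fern\'andez paper, which states precisely that a set $X$ is H-finite if and only if $[X]^{<\omega}$ is Dedekind-finite. What you have done, in effect, is reprove the needed instance of that cited theorem from scratch. Your forward direction is a nice self-contained combinatorial argument: the colouring $F\mapsto v_2(|F|)\bmod 2$ together with the mod-$4$ analysis of pairwise unions inside a fibre (three odd residues cannot pairwise sum to $0$ modulo $4$) gives fibres of size at most $2$, and taking unions of whole fibres is a canonical, choice-free way to turn the infinitely many fibres into an injection of $\omega$ into $[X]^{<\omega}$; this is elementary and makes the proposition independent of the external reference. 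Your converse direction (disjointify a countable family of finite sets by the least-index recursion, transport the colouring to $[\omega]^{<\omega}$, apply Hindman's finite unions theorem on $\omega$, and pull the solution back) is also the natural argument and is choice-free, with one caveat: the parenthetical justification ``a theorem of $\zf$, since $\omega$ is well-orderable'' is too quick. The standard Galvin--Glazer proof uses an idempotent ultrafilter on $\omega$, which well-orderability of $\omega$ does not provide; the correct justification is either that Hindman's theorem for $\omega$ is a $\Pi^1_2$ statement, so one can apply ZFC inside $L[c]$ and use Shoenfield-type absoluteness, or that the purely combinatorial proofs (formalizable in subsystems of second-order arithmetic) avoid choice altogether. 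With that justification repaired, your proof is complete; what the paper's citation buys is brevity, while your version buys self-containment and an explicit, elementary witness for the implication $\hind\Rightarrow$ ``$[X]^{<\omega}$ is Dedekind-infinite''.
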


\begin{proof}
By~\cite[Theorem 3.2]{brot-cao-fernandez}, a set $X$ is H-finite if and only if $[X]^{<\omega}$ is Dedekind-finite. Thus the proposition follows immediately.
\end{proof}

\begin{corollary}\label{arrows-ht}
In $\zf$, $\fdf$ implies $\hind$, which in turn implies Form 82.
\end{corollary}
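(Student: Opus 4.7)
The plan is to use Proposition~\ref{equiv-hindman}, which recasts $\hind$ as the assertion that $[X]^{<\omega}$ is Dedekind-infinite for every infinite set $X$. With this reformulation in hand, both implications become essentially routine observations about Dedekind-infiniteness being inherited upwards along injections.

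For the first implication, I would assume $\fdf$ and let $X$ be an arbitrary infinite set. The map $x\mapsto\{x\}$ is an injection from $X$ into $[X]^{<\omega}$, so $[X]^{<\omega}$ is infinite, and therefore Dedekind-infinite by $\fdf$. Invoking Proposition~\ref{equiv-hindman} yields $\hind$.

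For the second implication, I would assume $\hind$ and again let $X$ be an arbitrary infinite set. By Proposition~\ref{equiv-hindman}, $[X]^{<\omega}$ is Dedekind-infinite, so there is an injection $\omega\longrightarrow[X]^{<\omega}$. Composing with the inclusion $[X]^{<\omega}\subseteq\wp(X)$ gives an injection $\omega\longrightarrow\wp(X)$, showing $\wp(X)$ is Dedekind-infinite, which is exactly Form 82.

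There is no real obstacle here; both directions are immediate given Proposition~\ref{equiv-hindman}, and indeed the author has already signaled in the paragraph preceding Figure~\ref{fig:diagram} that these two implications will be ``obvious'' once the equivalence is in place. The only bookkeeping point worth stating explicitly is that Dedekind-infiniteness transfers along injections into a superset (which is trivial, since an injection $\omega\hookrightarrow A$ composed with $A\hookrightarrow B$ gives $\omega\hookrightarrow B$), and this is what carries the argument in both halves.
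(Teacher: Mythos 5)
Your proposal is correct and follows exactly the paper's route: the paper derives the corollary as ``immediate'' from Proposition~\ref{equiv-hindman}, and your two observations (singletons inject $X$ into $[X]^{<\omega}$, and $[X]^{<\omega}\subseteq\wp(X)$ so Dedekind-infiniteness passes upward) are precisely the routine details that justification leaves implicit.
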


\begin{proof}
Immediate from Proposition~\ref{equiv-hindman}.
\end{proof}

In particular, by taking any model of $\zf$ in which $\fdf$ holds but $\ac$ fails, we see that $\hind$ is strictly weaker than the full Axiom of Choice. The fact that the implication $\fdf\Rightarrow\hind$ is not reversible is established in Sec.~\ref{consistency-results}. That the implication $\hind\Rightarrow\text{Form 82}$ is not reversible is the content of Sec.~\ref{ht-vs-form82}.

In light of Proposition~\ref{equiv-hindman}, we see that $\hind$ is precisely the piece that is missing from either $\kl$ or $\rt$ to get $\fdf$, as shown by the following proposition.

\begin{proposition}\label{konigplushindman}
In $\zf$, the following are equivalent:
\begin{enumerate}
\item $\fdf$,
\item $\rt\wedge\hind$,
\item $\kl\wedge\hind$.
\end{enumerate}
\end{proposition}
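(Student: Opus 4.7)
The plan is to establish the cycle $(1)\Rightarrow(2)\Rightarrow(3)\Rightarrow(1)$: the first two implications are essentially packaged in facts already recorded in the text, and the combinatorial content is concentrated in $(3)\Rightarrow(1)$. For $(1)\Rightarrow(2)$ I would simply combine $\fdf\Rightarrow\hind$ from Corollary~\ref{arrows-ht} with the classical $\zf$-fact $\fdf\Rightarrow\rt$ recalled after the list of choice principles; and $(2)\Rightarrow(3)$ follows from the further classical fact $\rt\Rightarrow\kl$ mentioned in the same paragraph.

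For the substantive direction $(3)\Rightarrow(1)$, the plan is as follows. Assume $\kl\wedge\hind$ and fix an infinite set $X$; the goal is to exhibit an injection $\omega\hookrightarrow X$. By Proposition~\ref{equiv-hindman}, $[X]^{<\omega}$ is Dedekind-infinite, so there is an injection $f:\omega\longrightarrow[X]^{<\omega}$. Writing $a_n=f(n)$, and discarding the at most one index $n$ with $a_n=\varnothing$ before reindexing, I may assume $(a_n)_{n<\omega}$ is a sequence of pairwise distinct \emph{nonempty} finite subsets of $X$. The key move is then to \emph{disjointify} this sequence, so that any choice function supplied by $\kl$ becomes automatically injective.

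Explicitly, I would set $b_n=a_n\setminus\bigcup_{m<n}a_m$ and $S=\{n<\omega:b_n\neq\varnothing\}$. By construction, $\{b_n:n\in S\}$ is a pairwise disjoint family of nonempty finite subsets of $X$. The main combinatorial observation is that $S$ must be infinite: otherwise there would be some $N$ such that $a_n\subseteq\bigcup_{m<n}a_m$ for every $n\geq N$, and a routine induction then shows $a_n\subseteq\bigcup_{m<N}a_m$ for all $n$, forcing infinitely many distinct $a_n$ to lie inside a fixed finite set, which is absurd since a finite set has only finitely many subsets. Applying $\kl$ to the countable family $\{b_n\}_{n\in S}$ then furnishes a choice function $n\mapsto x_n\in b_n$, and pairwise disjointness makes it injective. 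Composing with the order-preserving bijection $\omega\to S$ (available since $S\subseteq\omega$ is infinite) delivers the desired injection $\omega\hookrightarrow X$.

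The only place where the argument could conceivably break is the infinitude of $S$, so that is what I regard as the \emph{main obstacle}; however, its verification is elementary, and the whole proof effectively reduces to a single disjointification trick applied to the injection supplied by Proposition~\ref{equiv-hindman}.
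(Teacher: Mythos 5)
Your proposal is correct and follows essentially the same route as the paper: reduce via Proposition~\ref{equiv-hindman} to an injective sequence of finite subsets of $X$, disjointify it, and then apply $\kl$ to the resulting pairwise disjoint family so that the choice function is automatically injective. Your explicit verification that the disjointified family remains infinite is exactly the content the paper compresses into its ``recursively replacing, if necessary'' remark.
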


\begin{proof}\hfill
\begin{description}
\item[(1)$\Rightarrow$(2)] This is immediate from Corollary~\ref{arrows-ht} together with the well-known fact that $\fdf\Rightarrow\rt$.
\item[(2)$\Rightarrow$(3)] Immediate from the fact that $\rt\Rightarrow\kl$.
\item[(3)$\Rightarrow$(1)] Assume that $\hind$ and $\kl$ both hold, and let $X$ be an arbitrary infinite set. By Proposition~\ref{equiv-hindman}, $\hind$ implies that $[X]^{<\omega}$ is Dedekind-infinite and so there is a countable injective sequence $\langle F_n\big|n<\omega\rangle$ of finite subsets of $X$. Recursively replacing, if necessary, each $F_n$ with $F_m\setminus\left(\bigcup_{k<m}F_k\right)$, where $m\geq n$ is the least index such that this set is nonempty, we may assume that the $F_n$ are pairwise disjoint and nonempty. The sequence of $F_n$ forms a countable family of nonempty finite sets, so by K\"onig's lemma there is a choice function $f:\omega\longrightarrow\bigcup_{n<\omega}F_n\subseteq X$. Since the $F_n$ are pairwise disjoint and each $f(n)\in F_n$, we conclude that the function $f:\omega\longrightarrow X$ is in fact injective, and so $X$ is Dedekind-infinite.
\end{description}
\end{proof}

We finish the section with a couple more $\zf$ results that will be useful in the next section. To state the first one, we recall a definition from~\cite{herrlich-finiteinfinite}.

\begin{definition}[\cite{herrlich-finiteinfinite}, Definition 8]
A set $X$ is said to be {\em C-finite} if there is no surjection $f:X\longrightarrow\omega$, and it is {\em C-infinite} if it is not C-finite.
\end{definition}

C-finite sets were called {\em dually Dedekind-finite} by Degen~\cite{degen}. It follows from~\cite[Lemma 4.11]{herrlich-choice} that any set $X$ is C-finite if and only if $\wp(X)$ is Dedekind-finite. In particular, Form 82 can be thought of as the statement that every infinite set is C-infinite.

Recall also that a set is {\em amorphous} if it is infinite and its only subsets are the finite ones and the cofinite ones.

\begin{proposition}\label{amorphous-powerset}
In $\zf$, if $X$ is amorphous and C-infinite, then $X$ is H-infinite.
\end{proposition}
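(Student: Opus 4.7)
The plan is to use Proposition~\ref{equiv-hindman} to reduce the problem to showing that $[X]^{<\omega}$ is Dedekind-infinite, and then to exhibit an injection $\omega \to [X]^{<\omega}$ directly from the given data.

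First, since $X$ is C-infinite, pick any surjection $f : X \longrightarrow \omega$. The sets $F_n = f^{-1}(n)$ form a partition of $X$ into countably many nonempty pieces (nonempty by surjectivity, pairwise disjoint by the definition of preimage).

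The key step, which is where amorphousness enters, is to show that each $F_n$ is finite. Since $X$ is amorphous, each $F_n \subseteq X$ is either finite or cofinite. If some $F_{n_0}$ were cofinite, then $X \setminus F_{n_0} = \bigcup_{n \neq n_0} F_n$ would be finite, yet this finite set would contain infinitely many nonempty, pairwise disjoint pieces $F_n$ (one for each $n \in \omega \setminus \{n_0\}$, by surjectivity), a contradiction. So every $F_n$ belongs to $[X]^{<\omega}$.

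Then $n \mapsto F_n$ is an injection $\omega \longrightarrow [X]^{<\omega}$ (distinct indices give disjoint nonempty sets, hence distinct elements of $[X]^{<\omega}$), witnessing that $[X]^{<\omega}$ is Dedekind-infinite. Proposition~\ref{equiv-hindman} then yields that $X$ is H-infinite. I do not foresee a real obstacle here: the proof is almost entirely bookkeeping, with the only nontrivial observation being the incompatibility of amorphousness with a cofinite preimage under a surjection to $\omega$.
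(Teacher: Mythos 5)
Your proof is correct, and it takes a mildly different route from the paper's. The paper starts from the characterization (quoted from~\cite[Lemma 4.11]{herrlich-choice}) that C-infinite means $\wp(X)$ is Dedekind-infinite, extracts an arbitrary injective sequence $\langle A_n\mid n<\omega\rangle$ of subsets of $X$, and then must thin out by pigeonhole (all $A_n$ finite or all cofinite) and pass to complements in the cofinite case to land in $[X]^{<\omega}$. You instead work directly from the definition of C-infinite, a surjection $f:X\longrightarrow\omega$, and take its fibers; since these are nonempty and pairwise disjoint, amorphousness forces them all to be finite at once (a finite set cannot contain infinitely many pairwise disjoint nonempty sets), so no thinning or complementation is needed. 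Your version is slightly more self-contained, as it bypasses the Herrlich equivalence altogether; the paper's version is more robust in that it works from any injective $\omega$-sequence of subsets, not just a disjoint one. One citation nitpick: Proposition~\ref{equiv-hindman} as stated is the global equivalence between $\hind$ and ``every infinite set has Dedekind-infinite finite powerset,'' which does not by itself give the local implication for your fixed $X$; what you are really using is the local fact from~\cite[Theorem 3.2]{brot-cao-fernandez} (a set is H-finite if and only if its finite powerset is Dedekind-finite), which is exactly what the proof of Proposition~\ref{equiv-hindman} quotes, so with that reference your argument is complete.
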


\begin{proof}
Since $\wp(X)$ is Dedekind-infinite, there is an injective sequence $\langle A_n\big|n<\omega\rangle$ of subsets of $X$. Now, since $X$ is amorphous, each $A_n$ is either a finite, or a cofinite, subset of $X$; using the pigeonhole principle, thin out the sequence by eliminating terms so that either all of the $A_n$ are finite, or all of the $A_n$ are cofinite. In the first case, let $F_n=A_n$; in the second case let $F_n=X\setminus A_n$, for all $n<\omega$. In either case, the sequence $\langle F_n\big|n<\omega\rangle$ is an injective sequence of elements of $[X]^{<\omega}$, and we are done.
\end{proof}

The next proposition, which is the last of the section, will be useful when determining whether $\hind$ holds in Cohen's model for the failure of the $\ac$.

\begin{proposition}\label{linearly-ordered-plus-hindman}
In $\zf$, if $X$ is a linearly orderable H-infinite set, then $X$ is Dedekind-infinite
\end{proposition}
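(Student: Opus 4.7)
The plan is to leverage Proposition~\ref{equiv-hindman} to translate the H-infiniteness of $X$ into the existence of an injective sequence of finite subsets, and then use the linear order on $X$ to canonically extract from it an injective $\omega$-sequence of elements of $X$.

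More concretely, I would begin by applying Proposition~\ref{equiv-hindman}: since $X$ is H-infinite, $[X]^{<\omega}$ is Dedekind-infinite, so there is an injective sequence $\langle F_n \mid n<\omega\rangle$ of finite subsets of $X$. Next, I would pass to the monotone sequence $A_n = \bigcup_{k\leq n} F_k$, which is non-decreasing under $\subseteq$. A quick pigeonhole observation shows the chain $A_0 \subseteq A_1 \subseteq \cdots$ cannot eventually stabilize: if $A_n = A_N$ for all $n \geq N$, then every $F_n$ with $n \geq N$ would lie in the finite set $[A_N]^{<\omega}$, contradicting injectivity of the tail $\langle F_n \mid n\geq N\rangle$.

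With strict growth occurring infinitely often, I would recursively thin the sequence without invoking any choice: set $n_0 = 0$ and $n_{k+1} = \min\{n > n_k \mid A_n \supsetneq A_{n_k}\}$ (the minimum exists in $\omega$, which is well-ordered, so this definition uses no choice). Each difference $D_k = A_{n_{k+1}}\setminus A_{n_k}$ is then a nonempty \emph{finite} subset of the linearly ordered set $X$, hence has a canonically defined least element $x_k = \min D_k$ with respect to the fixed linear order on $X$. This yields a function $g\colon \omega \longrightarrow X$ by $g(k) = x_k$ with no appeal to a choice principle, since the linear order on $X$ provides a uniform selector on finite subsets.

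Finally, I would verify that $g$ is injective: for $i<j$ we have $x_i \in A_{n_{i+1}} \subseteq A_{n_j}$, whereas $x_j \in A_{n_{j+1}}\setminus A_{n_j}$, so $x_i \neq x_j$. Hence $X$ admits an injection from $\omega$ and is Dedekind-infinite. The only subtle point is the pigeonhole step showing the chain $\langle A_n\rangle$ cannot stabilize; the rest is bookkeeping. The essential role of the linear-orderability hypothesis is precisely to provide the uniform selector on finite subsets of $X$ that makes the passage from ``an injective sequence of finite subsets'' to ``an injective sequence of elements'' go through in $\zf$ alone.
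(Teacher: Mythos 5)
Your proof is correct and takes essentially the same route as the paper's: pass from H-infiniteness of $X$ to an injective sequence of finite subsets of $X$, disjointify it without choice (your cumulative-union differences are the same trick the paper imports from the proof of Proposition~\ref{konigplushindman}), and take $\leq$-minima to obtain an injective $\omega$-sequence in $X$. The only cosmetic remark is that the local fact you need---$X$ H-infinite implies $[X]^{<\omega}$ Dedekind-infinite---is the set-by-set equivalence quoted in the proof of Proposition~\ref{equiv-hindman} (from the cited reference), rather than the global statement of that proposition itself.
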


\begin{proof}
Let $\leq$ be a linear order on $X$ and, since $X$ is H-infinite, let $\langle F_n\big|n<\omega\rangle$ be an injective sequence of finite subsets of $X$. Using the same trick as in the proof of Theorem~\ref{konigplushindman}, we may assume that the $F_n$ are pairwise disjoint. Hence, if we define $x_n=\min_{\leq}F_n$, the sequence $\langle x_n\big|n<\omega\rangle$ of elements of $X$ is injective. Therefore, $X$ is Dedekind-infinite.
\end{proof}

\section{Models of $\zf$ and $\zfa$}\label{consistency-results}

There are two main techniques for independence proofs that we use throughout this paper. The first one is by means of the forcing technique, passing to a special submodel of a forcing extension to get a model of $\zf$; models obtained in this way are called {\em symmetric models}. The only model arising from this technique that we will study in detail is Cohen's basic model, as described in~\cite[Sec. 5.3]{jech-choice}; this model is denoted $\mathcal M_1$ in~\cite{howard-rubin}. The other technique that will be used is that of the Fraenkel--Mostowski permutation models of $\zfa$, as described in~\cite[Secs. 4.1 and 4.2]{jech-choice}. The three ``classical'' Fraenkel--Mostowski models that we will study in this section are the First and Second Fraenkel Model (denoted by $\mathcal N_1$ and $\mathcal N_2$, respectively, in~\cite{howard-rubin}), and Mostowski's Linearly Ordered Model ($\mathcal N_3$ in~\cite{howard-rubin}). These models are described (each on a different section) in~\cite[Secs. 4.3--4.5]{jech-choice}, and any unexplained notation is used as in that source. In Sec.~\ref{ht-vs-form82}, we will build a new permutation model in order to show that Form 82 does not imply $\hind$.

\subsection{Transferable statements and finiteness classes}

Since we are ultimately interested in proofs of independence from $\zf$, rather than from $\zfa$, it is necessary to justify that independence proofs from the latter can be transferred to independence proofs from the former, for statements like the ones we will consider in this paper.

\begin{definition}
Let $\varphi$ be a formula in the language of set theory.
\begin{enumerate}
\item If $\varphi$ is a statement, we say that $\varphi$ is {\em transferable} if there is a metatheorem stating that, if there exists a Fraenkel--Mostowski model $\mathcal N$ of $\zfa$ satisfying $\varphi$, then there exists a model $\mathcal M$ of $\zf$ that also satisfies $\varphi$.
\item We say that $\varphi$ is a {\em boundable formula} if there is an absolutely definable ordinal $\alpha$ such that, for every $x$, we have that $\varphi(x)$ is equivalent to its relativization $\varphi^{\wp^\alpha(x)}(x)$ (here $\wp^\alpha$ denotes the usual iterated powerset operation, defined recursively by $\wp^0(x)=x$, $\wp^{\xi+1}(x)=\wp(\wp^{\xi}(x))$, and $\wp^\xi(x)=\bigcup_{\beta<\xi}\wp^{\beta}(x)$ for limit $\xi$).
\item A {\em boundable statement} is the existential closure of a boundable formula.
\item We say that $\varphi$ is {\em injectively boundable} if it is a (finite) conjunction of formulas of the form
\begin{equation*}
(\forall y)(\aleph(y)\leq\sigma(x)\Rightarrow\psi(y,x))
\end{equation*}
where $\psi(y,x)$ is a boundable formula and $\sigma(y)$ is a term\footnote{For the purposes of this paper, it is always sufficient to take $\sigma(y)=\aleph_0$.} defined by a boundable formula that depends on $y$ (here $\aleph(y)$ is the Hartogs number of $y$, the least ordinal number that does not inject in $y$).
\item An {\em injectively boundable statement} is the existential closure of an injectively boundable formula.
\end{enumerate}
\end{definition}

The definitions of a boundable formula and statement are from~\cite{jech-sochor}, and all of the other definitions can be found in~\cite{pincus-transfer}. The classical Jech--Sochor theorem~\cite{jech-sochor} (see also~\cite[Theorem 6.1]{jech-choice}) states that all boundable statements are transferable. A generalization of this result was established by Pincus~\cite[Metatheorem 2A6]{pincus-transfer}, who proved that all injectively boundable statements are transferable (note that the class of injectively boundable statements contains all boundable statements and is closed under conjunction, so Pincus's result is stronger that Jech--Sochor's). An even stronger result that will be enough for our purposes is the following.

\begin{theorem}\label{pincus-transfer-theorem}
Any conjunction of a finite number of injectively boundable statements together with any statements among $\op$, $\bpi$, $\dc$, $\cc$, is transferable.
\end{theorem}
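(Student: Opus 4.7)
The plan is to cite Pincus's transfer metatheorems from~\cite{pincus-transfer} directly, since this theorem is essentially a compendium of results from that paper rather than something to be reproved from scratch. First I would invoke Pincus's Metatheorem 2A6, which already gives transferability of any single injectively boundable statement; combined with the text's earlier observation that the class of injectively boundable formulas is closed under finite conjunction, this yields the ``injectively boundable'' half of the statement at no extra cost.

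Next I would quote the further metatheorems from the same paper (the ones immediately following 2A6) in which Pincus shows that each of $\op$, $\bpi$, $\dc$, and $\cc$ can be added as an additional conjunct while preserving transferability. The underlying mechanism is the Jech--Sochor-style passage from a permutation model $\mathcal N\models\zfa$ to a symmetric submodel $\mathcal M$ of a forcing extension of a well-founded ground model: one chooses the forcing poset, the automorphism group, and the normal filter of subgroups in $\mathcal M$ so as to mirror the group and filter data defining $\mathcal N$. Because the relevant witnesses of a boundable formula $\varphi(x)$ live by definition inside $\wp^\alpha(x)$ for an absolute $\alpha$, the mirroring ensures $\varphi$ is reflected at the corresponding rank in $\mathcal M$; the injectively boundable case is handled by restricting attention to $y$ with $\aleph(y)$ below the relevant threshold.

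For the four named choice principles the additional work is as follows: $\dc$ and $\cc$ are preserved because the symmetric system can be arranged to be $\sigma$-closed in the appropriate sense (so that countable sequences of names from the model are already named by symmetric names); $\op$ and $\bpi$ are preserved by taking the forcing to be a product of linearly ordered, well-behaved atomic pieces, which keeps every set in $\mathcal M$ linearly orderable and every Boolean algebra in possession of a prime ideal. All four enhancements are explicitly carried out in~\cite{pincus-transfer}, so in our setting the proof amounts to a citation.

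The main obstacle, if one insisted on reproving the theorem instead of citing it, would be verifying that the four ``add-on'' principles can be preserved \emph{simultaneously} with one another and with the injectively boundable content, since the forcing-theoretic demands they impose (homogeneity for $\bpi$, orderability for $\op$, countable completeness for $\dc$ and $\cc$, rank-control for the boundable parts) must all be satisfied by a single symmetric extension. Fortunately, Pincus's construction handles this coherently, so for the purposes of the present paper the proof will consist of a one-line appeal to~\cite{pincus-transfer}.
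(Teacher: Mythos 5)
Your overall strategy --- treating the theorem as a citation to Pincus's transfer machinery rather than something to be reproved --- is the same as the paper's, but the citation you give does not actually cover the statement. Metatheorem 2A6 of~\cite{pincus-transfer} yields only the first half: transferability of (finite conjunctions of) injectively boundable statements. The ability to conjoin $\dc$ and $\cc$ (and to combine them simultaneously with $\bpi$ and $\op$) is \emph{not} established in the 1972 paper; there are no metatheorems ``immediately following 2A6'' that add these principles. That extension is precisely the content of Pincus's later work, and the paper proves the present theorem by citing \cite[Theorem 4 and the note on p.~145]{pincus-add-dep-choice} together with \cite[p.~547]{pincus-add-dep-choice-to-pi}. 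Since your proof is by design a one-line appeal to a source, appealing to a source that lacks the $\dc$/$\cc$ (and combined $\bpi$+$\dc$) components is a genuine gap, not merely a bibliographic slip.

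The gap also shows in your sketch of the mechanism. The Jech--Sochor/Pincus embedding forcings are not $\sigma$-closed in any useful sense (they add many generic objects mirroring the atoms), and rank-bounded reflection of boundable formulas cannot capture $\dc$, which is a global statement about the whole model; this is exactly why transfer with $\dc$ remained open after 1972 and required the separate techniques of~\cite{pincus-add-dep-choice} and~\cite{pincus-add-dep-choice-to-pi}. Likewise, ``taking the forcing to be a product of linearly ordered pieces'' is not how $\bpi$ is preserved in these arguments. If you want to keep the proof as a citation, cite the 1977 papers as the paper does; if you want to argue the preservation claims yourself, the simultaneous-preservation issue you flag at the end is the real difficulty and cannot be waved away by pointing back to~\cite{pincus-transfer}.
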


\begin{proof}
This is a consequence of \cite[Theorem 4 and note in p. 145]{pincus-add-dep-choice} (see also \cite[p. 547]{pincus-add-dep-choice-to-pi}).
\end{proof}

Pincus's results are even more general (a much more general transfer theorem is stated in \cite[p. 286]{howard-rubin}); here we have stated only what can be expressed in terms of the definitions given so far, which will be enough for our purposes.

Recall that a {\em finiteness class} is a class of sets $\mathscr F$ containing all finite sets, not containing $\omega$, and closed under subsets and bijective images. It is worth noting that all the variations of ``finite'' that we have mentioned here (namely H-finite, Dedekind-finite and C-finite) constitute finiteness classes.

\begin{definition}
We will say that a finiteness class $\mathscr F$ is {\em tame} if there is a boundable formula $\varphi(x)$ such that $\mathscr F=\{x\mid\varphi(x)\}$.
\end{definition}

A glance at the definitions will convince the reader that the classes of H-finite, Dedekind-finite and C-finite sets are all tame (this is also explained, with some more detail, in~\cite[p. 15, third paragraph]{brot-cao-fernandez}).

\begin{theorem}\label{finiteness-classes-transfer}
Let $\mathscr F,\mathscr G$ be tame finiteness classes. Then, both the statement that $\mathscr F=\mathscr G$ and the statement that $\mathscr F\neq\mathscr G$ are 
injectively boundable (and hence transferable).
\end{theorem}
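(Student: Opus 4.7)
The plan is to exploit the fact that every set in a finiteness class has Hartogs number at most $\aleph_0$. Fix boundable formulas $\varphi(x)$ and $\psi(x)$ defining $\mathscr F$ and $\mathscr G$, respectively. If $x\in\mathscr F$ and $\omega$ injected into $x$, then closure of $\mathscr F$ under subsets would place a subset of $x$ in bijection with $\omega$ into $\mathscr F$, and closure under bijective images would then force $\omega\in\mathscr F$, contradicting the definition of a finiteness class. Hence $\aleph(x)\leq\aleph_0$ for every $x\in\mathscr F$, and symmetrically for $\mathscr G$.

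Using this observation, the equality statement $\mathscr F=\mathscr G$, which literally reads $(\forall y)(\varphi(y)\leftrightarrow\psi(y))$, is equivalent to
\begin{equation*}
(\forall y)\bigl(\aleph(y)\leq\aleph_0\Rightarrow(\varphi(y)\leftrightarrow\psi(y))\bigr),
\end{equation*}
since whenever $\aleph(y)>\aleph_0$ both sides of the biconditional fail and it holds vacuously. Boolean combinations of boundable formulas remain boundable (one relativizes to $\wp^\alpha$ with $\alpha$ the maximum of the individual bounds), so $\varphi(y)\leftrightarrow\psi(y)$ is boundable, and the above display is exactly a (single-conjunct) injectively boundable formula, taking $\sigma=\aleph_0$ as permitted by the footnote to the definition. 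Having no free variables, it is also an injectively boundable statement.

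The inequality $\mathscr F\neq\mathscr G$, which is $(\exists y)\neg(\varphi(y)\leftrightarrow\psi(y))$, is simpler: it is already the existential closure of a boundable formula, and hence a boundable statement, which is in turn injectively boundable (as noted in the paper, the class of injectively boundable statements contains all boundable statements). Transferability in both cases then follows immediately from Theorem~\ref{pincus-transfer-theorem}. I do not anticipate any serious obstacle: the only real content is the Hartogs-number bound on members of a finiteness class, after which both statements match the requisite syntactic templates with only careful bookkeeping required.
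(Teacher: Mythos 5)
Your proof is correct and follows essentially the same route as the paper: the inequality is handled as a boundable (hence injectively boundable) existential statement, and the equality is reduced, via the observation that every member of a finiteness class is Dedekind-finite (i.e.\ has Hartogs number at most $\aleph_0$), to a formula of the shape $(\forall y)(\aleph(y)\leq\aleph_0\Rightarrow(\varphi(y)\leftrightarrow\psi(y)))$ with $\varphi\leftrightarrow\psi$ boundable. The only cosmetic difference is that you derive the Hartogs bound directly from closure under subsets and bijective images, where the paper phrases it as the class of Dedekind-finite sets being the largest finiteness class.
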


\begin{proof}
Let $\varphi(x),\psi(x)$ be boundable formulas such that $\mathscr F=\{x\big|\varphi(x)\}$ and $\mathscr G=\{x\big|\psi(x)\}$. Note that the class of boundable formulas is closed under Boolean combinations (conjunctions, disjunctions and negations), and that every boundable formula (respectively, statement) is also injectively boundable (respectively, statement). Hence, the statement $\mathscr F\neq\mathscr G$, which is equivalent to $(\exists x)((\varphi(x)\wedge\neg\psi(x))\vee(\neg\varphi(x)\wedge\psi(x)))$, is boundable, hence injectively boundable.

For the remaining statement, recall that the class of Dedekind-finite sets (denoted $\text{D-Fin}$) is the largest finiteness class (this is a consequence of the fact that finiteness classes do not contain $\omega$ and are closed under subsets), and so $\mathscr F,\mathscr G\subseteq\text{D-Fin}$. Hence, the statement that $\mathscr F=\mathscr G$ is equivalent to the statement that for every Dedekind-finite set $x$, $x\in\mathscr F\iff x\in\mathscr G$. It follows immediately from the definition that a set $x$ is Dedekind-finite if and only if $\aleph(x)\leq\omega$. Hence, the statement that $\mathscr F=\mathscr G$ is equivalent to the statement
\begin{equation*}
(\forall x)(\aleph(x)\leq\omega\Rightarrow(\varphi(x)\iff\psi(x))),
\end{equation*}
which is an injectively boundable statement.
\end{proof}

\subsection{The truth-value of $\hind$ in the models}

We now proceed to determine whether $\hind$ holds in each of the four models mentioned at the beginning of the section.

\begin{theorem}\label{prop:firstfraenkel}
$\hind$ does not hold in the First Fraenkel Model $\mathcal N_1$.
\end{theorem}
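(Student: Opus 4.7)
Recall that the First Fraenkel Model $\mathcal{N}_1$ is built starting from a countably infinite set $A$ of atoms, with the group being $\sym(A)$ and the normal filter generated by the pointwise stabilizers of finite subsets of $A$. In view of Proposition~\ref{equiv-hindman}, my plan is to exhibit, inside $\mathcal{N}_1$, an infinite set whose finite powerset is Dedekind-finite. The natural candidate is $A$ itself: it is in $\mathcal{N}_1$ with empty support, and it is infinite there (by a symmetry argument of the same flavour as the one below, no injection $\omega \to A$ can have a finite support in $\sym(A)$).

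To show that $[A]^{<\omega}$ is Dedekind-finite in $\mathcal{N}_1$, I would argue by contradiction: suppose $f\colon\omega\longrightarrow [A]^{<\omega}$ is injective and lies in $\mathcal{N}_1$, and let $E\subseteq A$ be a finite support for $f$. Because each $n<\omega$ is a pure element, invariance of $f$ under every $\pi\in\sym(A)$ fixing $E$ pointwise unravels to $\pi[f(n)] = f(n)$ for all $n<\omega$. The key step is then to show that this forces $f(n)\subseteq E$ for every $n$. Indeed, if some $a\in f(n)\setminus E$ existed, one could pick $a'\in A\setminus (E\cup f(n))$ (such an $a'$ exists because $A$ is infinite while $E$ and $f(n)$ are finite); the transposition $(a\ a')$ fixes $E$ pointwise yet sends $a\in f(n)$ to $a'\notin f(n)$, so $(a\ a')[f(n)]\neq f(n)$, a contradiction.

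Once $\ran(f)\subseteq\wp(E)$ has been established, $f$ becomes an injection from $\omega$ into the finite set $\wp(E)$, which is absurd. Hence $[A]^{<\omega}$ is Dedekind-finite in $\mathcal{N}_1$, and Proposition~\ref{equiv-hindman} yields that $\hind$ fails in $\mathcal{N}_1$. There is no serious obstacle in this plan; the whole argument rides on the freedom to swap any two atoms outside the finite support $E$, which is precisely the structural feature that makes $\mathcal{N}_1$ the canonical model exhibiting an amorphous set, so the proof is essentially a direct symmetry computation.
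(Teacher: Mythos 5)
Your proposal is correct, but it takes a different (more self-contained) route than the paper: the paper's proof is a one-line citation of Proposition~4.2 of the Brot--Cao--Fern\'andez paper, which states directly that the set $A$ of atoms is H-finite in $\mathcal N_1$, whereas you re-derive this fact from scratch by showing that $[A]^{<\omega}$ is Dedekind-finite in $\mathcal N_1$ and then invoking Proposition~\ref{equiv-hindman}. Your symmetry computation is sound: if $f:\omega\longrightarrow[A]^{<\omega}$ is injective with finite support $E$, then (since each $n<\omega$ is fixed by every permutation) each value $f(n)$ is setwise fixed by every $\pi$ fixing $E$ pointwise, and the transposition argument correctly forces $f(n)\subseteq E$, so $\ran(f)\subseteq\wp(E)$ is finite, a contradiction; note that the same computation also shows $A$ is infinite (indeed amorphous) in $\mathcal N_1$, so the hypotheses of Proposition~\ref{equiv-hindman} are met. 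The only point worth flagging is that Proposition~\ref{equiv-hindman} is stated over $\zf$ while you apply it inside a permutation model of $\zfa$; this is harmless, since the underlying equivalence (H-finiteness of $X$ versus Dedekind-finiteness of $[X]^{<\omega}$) is proved without any use of foundation and the paper itself applies it in exactly this way inside $\mathcal N_2$ in Theorem~\ref{prop:secondfraenkel}, but a careful write-up should say so. In exchange for being longer than the paper's citation, your argument has the merit of exhibiting explicitly the structural feature responsible for the failure of $\hind$ (every symmetric finite set of atoms is contained in its support), which is also the key to the related fact, used later in the paper, that Form~82 fails in $\mathcal N_1$.
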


\begin{proof}
In $\mathcal N_1$ there is an infinite, H-finite set. In fact, the set $A$ of atoms is such a set by~\cite[Proposition 4.2]{brot-cao-fernandez}.
\end{proof}

\begin{theorem}\label{prop:secondfraenkel}
In the Second Fraenkel Model $\mathcal N_2$, $\hind$ holds.
\end{theorem}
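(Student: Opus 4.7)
The plan is to invoke Proposition~\ref{equiv-hindman} and show that in $\mathcal{N}_2$ every infinite set $X$ has $[X]^{<\omega}$ Dedekind-infinite. Recall that $\mathcal{N}_2$ is built from countably many pairs $P_n=\{a_n,b_n\}$ of atoms, with permutation group $G=\prod_{n}\sym(P_n)$ consisting of all pair-preserving bijections of $A=\bigcup_n P_n$, and with finite subsets of $A$ as supports.

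First, given an infinite $X\in\mathcal{N}_2$ with finite support $E\subseteq A$, I would set $H=\fix(E)$ and partition $X$ into $H$-orbits $\Pi:=X/H$. Every $x\in X$ has finite support $S_x$, so $\fix(E\cup S_x)\subseteq \stab(x)$, forcing the $H$-orbit of $x$ to have size at most $[H:\fix(E\cup S_x)]<\infty$. Thus each orbit is a finite subset of $X$, i.e., an element of $[X]^{<\omega}$; and since $X$ is infinite while each orbit is finite, $\Pi$ is infinite. Each orbit is $H$-invariant, so has support $\subseteq E$, and $\Pi$ itself lies in $\mathcal{N}_2$ with support $\subseteq E$. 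In particular $\Pi\subseteq[X]^{<\omega}$, so it suffices to show that $\Pi$ is Dedekind-infinite.

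The crux is the following general lemma: any $Y\in\mathcal{N}_2$ all of whose elements have support $\subseteq E$ is well-orderable in $\mathcal{N}_2$. To prove it, I would pick any well-ordering $\prec$ of $Y$ in the ambient $\zfa+\ac$ universe. For $\sigma\in\fix(E)$ and $y\in Y$, the hypothesis gives $\sigma(y)=y$, so $\sigma$ acts as the identity on $Y$ and $\sigma(\prec)=\prec$. Hence $\prec$ has support $\subseteq E$; and since its elements are Kuratowski pairs built from members of $Y$ (which are already hereditarily symmetric), $\prec$ is itself hereditarily symmetric and therefore belongs to $\mathcal{N}_2$.

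Applying the lemma to $\Pi$ yields an infinite well-orderable set, which is therefore Dedekind-infinite, producing an injection $\omega\to\Pi\subseteq[X]^{<\omega}$. This shows $X$ is H-infinite, and since $X$ was arbitrary, $\hind$ holds in $\mathcal{N}_2$. The main conceptual hurdle is the key lemma: the hypothesis ``every element of $Y$ has support $\subseteq E$'' is not hereditary --- elements of elements of $Y$ may have arbitrarily large supports --- yet it is just strong enough to make any externally chosen well-ordering symmetric over $E$, precisely because $\fix(E)$ acts trivially on $Y$ itself. Once that point is recognized, the remainder is orbit bookkeeping.
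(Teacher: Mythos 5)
Your argument is correct, and it rests on the same engine as the paper's proof --- the finiteness of $\fix(E)$-orbits in $\mathcal N_2$, which in both cases comes down to the fact that an element supported by a finite set can be moved to only finitely many images by permutations in $\prod_n\sym(P_n)$ (each factor has order $2$, so $[\fix(E):\fix(E\cup S_x)]\leq 2^{|S_x|}$; this is the one point where you assert ``$<\infty$'' without spelling out that it is exactly the structure of $\mathcal N_2$ being used, and it is worth a sentence). Where you genuinely diverge is in how Dedekind-infiniteness of $[X]^{<\omega}$ is extracted. The paper splits into cases according to whether $X$ is well-orderable, and in the non-well-orderable case externally recurses to pick elements $x_k$ whose minimal supports grow, so that the enumerated sequence of orbits $\langle Y_k\mid k<\omega\rangle$ is itself supported by $F_0$ and directly yields a map $\omega\to[X]^{<\omega}$ with infinite range. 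You instead take the full orbit decomposition $\Pi=X/\fix(E)$ at once and invoke your lemma that any set in $\mathcal N_2$ on which $\fix(E)$ acts trivially is well-orderable inside the model, so the infinite set $\Pi\subseteq[X]^{<\omega}$ is Dedekind-infinite; this removes both the case distinction and the external recursion, at the price of the extra (easy, and correctly proved) well-orderability lemma. That lemma is a standard device which the paper itself deploys later, in Case 2 of the proof of Theorem~\ref{nopalmodelsatisfies82}, so your route is a legitimate and arguably more uniform variant; your closing remark that the hypothesis of the lemma is non-hereditary but still suffices because $\fix(E)$ acts trivially on $\Pi$ itself is exactly the right point to flag.
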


\begin{proof}
Take an arbitrary infinite set $X\in\mathcal N_2$ and let us argue that its finite powerset $[X]^{<\omega}$ is Dedekind infinite. If $X$ is well-orderable we are done, so assume that it is not. Take a finite support $F_0:=\bigcup_{i=0}^{n_0}P_i$ for $X$. Now, working in the real world (rather than in $\mathcal N_2$), recursively choose $x_k\in X$ and $n_k\in\mathbb N$ such that $n_k<n_{k+1}$ and $x_k$ is not supported by $F_k=\bigcup_{i=0}^{n_k}P_i$ but it is supported by $F_{k+1}=\bigcup_{i=0}^{n_{k+1}}P_i$ (we can always choose such an $x_k$ because $X$ fails to be well-orderable in $\mathcal N_2$ and so no single finite subset of $A$ can simultaneously support every element of $X$). For each $k<\omega$, the set $Y_k=\{\pi(x_k)\big|\pi\text{ pointwise fixes }F_0\}$ is symmetric (supported by $F_0$) and hence it belongs to $\mathcal N_2$; as $X$ is supported by $F_0$, we have $Y_k\subseteq X$. Furthermore, note that, since $x_k$ is supported by $F_{k+1}$, the value of $\pi(x_k)$ is completely determined by $\pi\upharpoonright F_{k+1}$, whenever $\pi\in G$. There are only finitely many possible values for $\pi\upharpoonright F_{k+1}$ ---in fact, with the requirement that $\pi$ pointwise fixes $F_0$, and given that $F_{k+1}\setminus F_0=\bigcup_{i=n_0+1}^{n_{k+1}}P_i$, there are at most $2^{n_{k+1}-n_0}$ possible values for $\pi\upharpoonright F_{k+1}$. This means that $|Y_k|\leq 2^{n_{k+1}-n_0}$, so $Y_k$ is finite. As each $Y_k$ is supported by $F_0$, we may conclude that the set
\begin{equation*}
f=\left\{\langle k,Y_k\rangle\big|k<\omega\right\}
\end{equation*}
is also supported by $F_0$, and hence $f\in\mathcal N_2$. While $f$ need not be injective, notice that (since the $x_k$ are pairwise distinct and each $Y_k$ is finite) its range must be infinite, hence it is possible to modify $f$ and obtain an injective function $:\omega\longrightarrow[X]^{<\omega}$. Thus, we may conclude that $[X]^{<\omega}$ is a Dedekind-infinite set in $\mathcal N_2$. Therefore $\mathcal N_2\vDash\hind$.
\end{proof}

\begin{theorem}\label{prop:mostowskimodel}
In Mostowski's Linearly Ordered Model $\mathcal N_3$, $\hind$ fails.
\end{theorem}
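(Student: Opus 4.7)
The plan is to invoke Proposition~\ref{equiv-hindman} and exhibit an infinite set in $\mathcal N_3$ whose finite powerset is Dedekind-finite; the obvious candidate is the set $A$ of atoms itself. Recall that in Mostowski's model, $A$ is a countable dense linear order without endpoints (so order-isomorphic to $\mathbb Q$), the group $G$ acting on $A$ is the full group of order-automorphisms of $A$, and the normal filter is generated by pointwise stabilizers of finite subsets of $A$.

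First I would suppose toward contradiction that $[A]^{<\omega}$ is Dedekind-infinite in $\mathcal N_3$, so that there is an injective sequence $\langle F_n \mid n < \omega\rangle \in \mathcal N_3$ of elements of $[A]^{<\omega}$. Let $E \subseteq A$ be a finite support of this sequence; since every natural number $n$ is fixed by every $\pi \in G$, each $F_n$ must itself be supported by $E$. The key step --- and the main obstacle --- is to establish that the only finite subsets of $A$ supported by $E$ are the subsets of $E$ itself. For this, I would note that $A \setminus E$ splits as the disjoint union of the finitely many open intervals determined by $E$, and each such interval, being a countable dense linear order without endpoints, has a highly transitive order-automorphism group. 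Extending any order-automorphism of a single interval by the identity on the rest of $A$ yields an element of $G$ pointwise fixing $E$; if a finite $F \subseteq A$ supported by $E$ contained any point $a$ in some interval $I$, such an extension could be chosen to move $a$ into $I \setminus F$, contradicting the invariance of $F$.

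With this characterization in hand, the range of $n \mapsto F_n$ would be contained in $\wp(E)$, which is finite, contradicting injectivity. Hence $[A]^{<\omega}$ is Dedekind-finite in $\mathcal N_3$, and by Proposition~\ref{equiv-hindman} the principle $\hind$ fails there. Everything hinges on the characterization of finite subsets of $A$ supported by a given finite $E$; once that is nailed down, the rest of the argument is essentially a finite counting observation.
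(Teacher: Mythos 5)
Your argument is correct, but it proves the theorem by a genuinely different route than the paper. You work directly inside the permutation model: you show that any finite subset of $A$ supported by a finite $E\subseteq A$ must be contained in $E$ (via the homogeneity of the open intervals determined by $E$, extending an automorphism of one interval by the identity), deduce that an injective $\omega$-sequence of finite subsets of $A$ in $\mathcal N_3$ would have range inside the finite set $\wp(E)$, conclude that $[A]^{<\omega}$ is Dedekind-finite, and then invoke Proposition~\ref{equiv-hindman}. All the steps are sound, including the observation that each $F_n$ inherits the support $E$ because the natural numbers are fixed by every permutation. The paper instead avoids any support computation: it quotes the known facts that $\mathcal N_3\vDash\op$ (hence $\mathcal N_3\vDash\kl$) and $\mathcal N_3\not\vDash\fdf$, and then applies Proposition~\ref{konigplushindman}, so the failure of $\hind$ falls out of the equivalence $\kl\wedge\hind\iff\fdf$. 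Your approach buys a sharper, self-contained conclusion (the set of atoms itself is H-finite, with an explicit witness of why), at the cost of redoing model-specific work; the paper's approach is shorter but rests on imported facts about $\mathcal N_3$. Note also that your key lemma is essentially a repackaging of Proposition~\ref{linearly-ordered-plus-hindman}: since $A$ is linearly ordered in $\mathcal N_3$ and Dedekind-finite there (by the same support argument applied to single atoms), that proposition already yields that $A$ is H-finite, which would shorten your write-up.
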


\begin{proof}
It is known~\cite[Section 4.5]{jech-choice} that $\mathcal N_3\vDash\op$, and this implies that $\mathcal N_3\vDash\kl$; however, $\mathcal N_3\not\vDash\fdf$. Hence, by Proposition~\ref{konigplushindman}, it must be the case that $\mathcal N_3\not\vDash\hind$.
\end{proof}

\begin{theorem}\label{prop:basiccohen}
$\hind$ fails in the Basic Cohen Model $\mathcal M_1$.
\end{theorem}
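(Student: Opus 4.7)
The plan is to use Proposition~\ref{linearly-ordered-plus-hindman} in its contrapositive form: a linearly orderable Dedekind-finite set must be H-finite. So it suffices to exhibit a set in $\mathcal{M}_1$ that is infinite, linearly orderable, and Dedekind-finite.

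The natural candidate is the set $A = \{a_n : n < \omega\}$ of generic Cohen reals added in the construction of $\mathcal{M}_1$. First I would recall the standard setup: we force with finite partial functions $p : \omega \times \omega \to 2$ over a ground model of $\zfc$ to obtain reals $a_n \in 2^\omega$, then form the symmetric submodel $\mathcal{M}_1$ using the group of permutations of $\omega$ acting on names (permuting the indices of the $a_n$) with the filter of subgroups generated by pointwise stabilizers of finite subsets of $\omega$. Since each $a_n$ is (coded by) a real number, the entire set $A$ sits inside $\mathbb{R}$ in $\mathcal{M}_1$, and therefore $A$ inherits the usual linear order of $\mathbb{R}$; this linear order is symmetric (its defining formula involves no parameters other than $A$ itself, which is symmetric with empty support), so it belongs to $\mathcal{M}_1$.

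Second, I would invoke the classical fact that $A$ is Dedekind-finite in $\mathcal{M}_1$ (this is precisely the content of the original Cohen construction showing that $\zf$ does not prove ``every infinite set has a countably infinite subset''; see~\cite[Section~5.3]{jech-choice}). In brief, any injection $f : \omega \to A$ in $\mathcal{M}_1$ would have a symmetric name $\dot f$ supported by some finite $E \subseteq \omega$, and a standard symmetry argument --- swapping two indices $m, m' \notin E$ about which some condition forces a nontrivial statement regarding $\dot f$ --- produces a contradiction. I would either quote this verbatim from Jech or reproduce the short argument.

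Finally, combining the two, Proposition~\ref{linearly-ordered-plus-hindman} gives the conclusion: if $A$ were H-infinite then, being linearly orderable, it would be Dedekind-infinite, contradicting the previous step. Hence $A$ is H-finite in $\mathcal{M}_1$, and $\mathcal{M}_1 \not\vDash \hind$. The main obstacle is really only expository: deciding how much detail to give for the standard Dedekind-finiteness of $A$, since everything else is a one-line application of an already-proved proposition. I would likely just cite the reference and keep the proof short.
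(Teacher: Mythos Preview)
Your proposal is correct and follows essentially the same route as the paper: the paper also takes the set of generic Cohen reals, observes it is linearly orderable (as a subset of $\mathbb{R}$) and Dedekind-finite (citing the classical result), and then applies Proposition~\ref{linearly-ordered-plus-hindman} to conclude it is H-finite. The only difference is that you supply more expository detail on the construction of $\mathcal{M}_1$ and the symmetry argument, whereas the paper simply cites the references and keeps the proof to three lines.
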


\begin{proof}
Let $X$ be the set of countably many generic Cohen reals used to construct $\mathcal M_1$. $X$ is Dedekind-finite in $\mathcal M_1$~\cite[Chap. IV, Sec. 9, Theorem 1, p. 138]{cohen-stcontinuum} (using modern notation, the argument in~\cite[Lemma 5.15]{jech-choice} indeed shows this); it is also linearly orderable since $X\subseteq\mathbb R$. Hence, by Proposition~\ref{linearly-ordered-plus-hindman}, $X$ is also H-finite and so $\mathcal M_1\vDash\neg\hind$.
\end{proof}

\subsection{$\hind$ and $\bpi$, $\oep$, $\kw$, $\op$, $\kl$, $\dc$, $\cc$, $\fdf$}

With the information from the previous section under our belt, we are now able to establish which implications between $\hind$ and other choice principles can be proved under $\zf$. Recall that $\hind$ is simply the statement that the class of finite sets coincides with the class of H-finite sets; since both of these are tame finiteness classes, both $\hind$ and $\neg\hind$ are injectively boundable statements by Theorem~\ref{finiteness-classes-transfer}. This fact will be used extensively in what follows.

\begin{theorem}
Under $\zf$, the principles $\dc$, $\cc$ and $\fdf$ imply $\hind$, and none of these implications is reversible.
\end{theorem}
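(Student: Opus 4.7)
The plan is to dispense with the forward implications first, since they collapse to a chain already in place: the well-known facts $\dc\Rightarrow\cc\Rightarrow\fdf$ (from the discussion just after the list of ten classical principles) combine with Corollary~\ref{arrows-ht} to yield $\dc\Rightarrow\cc\Rightarrow\fdf\Rightarrow\hind$, so all three of the asserted implications follow. No work is required here beyond citing these facts.

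For the non-reversibility, I would reduce everything to the single statement $\hind\not\Rightarrow\fdf$; since $\cc$ and $\dc$ both imply $\fdf$, the failure of $\hind\Rightarrow\fdf$ automatically gives $\hind\not\Rightarrow\cc$ and $\hind\not\Rightarrow\dc$. To exhibit a $\zf$-model separating $\hind$ from $\fdf$, the natural candidate is the Second Fraenkel Model $\mathcal N_2$. Theorem~\ref{prop:secondfraenkel} already gives $\mathcal N_2\vDash\hind$. For the failure of $\fdf$, I would recall the standard argument that the set of atoms $A$ of $\mathcal N_2$ is Dedekind-finite: any injective sequence $\langle x_n\mid n<\omega\rangle$ of atoms must have some finite support $E$ consisting of a finite union of the pairs $P_i$; for $n$ large enough, $x_n$ lies in some pair $P_i$ outside $E$, and swapping the two atoms of $P_i$ gives a permutation fixing the sequence pointwise yet moving $x_n$, a contradiction. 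Hence $A$ is an infinite Dedekind-finite set in $\mathcal N_2$, witnessing $\mathcal N_2\vDash\neg\fdf$.

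To push this from $\zfa$ to $\zf$, I would apply the transfer machinery developed in the previous subsection. Both $\hind$ and $\fdf$ are statements asserting (or denying) the equality of the class of finite sets with a tame finiteness class, namely the class of H-finite sets and the class of Dedekind-finite sets respectively. By Theorem~\ref{finiteness-classes-transfer}, each of $\hind$ and $\neg\fdf$ is injectively boundable, and the class of injectively boundable statements is closed under finite conjunction. Hence $\hind\wedge\neg\fdf$ is injectively boundable, and Theorem~\ref{pincus-transfer-theorem} supplies a $\zf$-model $\mathcal M$ satisfying both conjuncts, completing all three non-reversibility claims.

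The main obstacle, such as it is, lies not in any intricate argument but in correctly recognizing that the entire theorem follows by combining already-established ingredients: the one genuinely new verification is that the Dedekind-finiteness of $A$ in $\mathcal N_2$ ensures $\mathcal N_2\not\vDash\fdf$, which I expect to be a two-line pair-swap argument as sketched above. Everything else is bookkeeping with the transfer theorems.
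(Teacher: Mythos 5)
Your proposal is correct and follows essentially the same route as the paper: reduce non-reversibility to exhibiting a model of $\hind\wedge\neg\fdf$, use the Second Fraenkel Model $\mathcal N_2$ via Theorem~\ref{prop:secondfraenkel}, and transfer to $\zf$ via Theorems~\ref{finiteness-classes-transfer} and~\ref{pincus-transfer-theorem}. The only difference is that you spell out the pair-swap argument for the Dedekind-finiteness of the atoms, which the paper dismisses as straightforward.
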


\begin{proof}
Since $\dc\Rightarrow\cc\Rightarrow\fdf\Rightarrow\hind$ (the latter implication due to Corollary~\ref{arrows-ht}), and both $\hind$ and $\neg(\fdf)$ are injectively boundable statements by Theorem~\ref{finiteness-classes-transfer}, Theorem~\ref{pincus-transfer-theorem} implies that it suffices to exhibit a model of $\zfa$ where $\hind$ holds but $\fdf$ fails. By Theorem~\ref{prop:secondfraenkel}, the Second Fraenkel Model $\mathcal N_2$ satisfies this (it is straightforward that the set of atoms $A$ in $\mathcal N_2$ is Dedekind-finite).
\end{proof}

\begin{theorem}\label{ht-and-bpi-co}
In $\zf$, there are no provable implications between $\hind$ and any of $\bpi$, $\oep$, $\kw$, $\op$, $\kl$ (thus $\hind$ is independent of each of these choice principles).
\end{theorem}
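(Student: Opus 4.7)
The plan is to reduce the ten required non-implications to three base facts via the $\zf$-provable hierarchy $\bpi \Rightarrow \oep \Rightarrow \op \Rightarrow \kl$ and $\kw \Rightarrow \op \Rightarrow \kl$: namely, it suffices to establish (i) $\hind \not\Rightarrow \kl$ (which kills all five $\hind \Rightarrow \Phi$, since every $\Phi$ on the list implies $\kl$), (ii) $\bpi \not\Rightarrow \hind$ (which kills $\bpi, \oep, \op, \kl \Rightarrow \hind$), and (iii) $\kw \not\Rightarrow \hind$ (the remaining backward direction, since $\kw$ is not implied by $\bpi$).

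For (i) I would use the Second Fraenkel Model $\mathcal{N}_2$: by Theorem~\ref{prop:secondfraenkel} it satisfies $\hind$, and the countable family $\langle P_n : n<\omega\rangle$ of atom-pairs witnesses the failure of $\kl$---any hereditarily symmetric choice function would be supported by a finite union of pairs $E$, but a within-pair swap on some $P_n$ disjoint from $E$ would alter it, contradiction. Both $\hind$ (by Theorem~\ref{finiteness-classes-transfer}) and $\neg\kl$ (which asserts the existence of a countable family of finite nonempty sets without a choice function, a boundable statement) are injectively boundable, so Theorem~\ref{pincus-transfer-theorem} transfers the conjunction to a $\zf$ model.

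For (ii) I would use Mostowski's Linearly Ordered Model $\mathcal{N}_3$: it classically satisfies $\bpi$ (cf.~\cite{howard-rubin}) and satisfies $\neg\hind$ by Theorem~\ref{prop:mostowskimodel}, and the conjunction is transferable by Theorem~\ref{pincus-transfer-theorem}, since $\bpi$ sits in its explicit augmentation list and $\neg\hind$ is injectively boundable. For (iii), a brief symmetry calculation rules out both $\mathcal{N}_3$ and $\mathcal{M}_1$ as witnesses (any candidate symmetric injection of the set of atoms of $\mathcal{N}_3$, respectively of $\wp$ of the set of Cohen reals in $\mathcal{M}_1$, into $\wp(\alpha)$ must identify permutation-equivalent elements lying outside its finite support), so a separate witness of $\kw + \neg\hind$ must be located or constructed---most naturally a symmetric (or permutation) model in which the generic objects are coded as subsets of ordinals from the outset, so that injections into $\wp(\alpha)$ are ``built in'' while the group action remains rich enough to leave some finite powerset Dedekind-finite.

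The main obstacle is step (iii): locating or constructing the appropriate witness for $\kw \not\Rightarrow \hind$, together with the verification that the relevant transfer machinery applies to it. Steps (i) and (ii) amount essentially to bookkeeping on top of Section~\ref{consistency-results} and Theorem~\ref{pincus-transfer-theorem}; the genuinely novel work is the $\kw$ direction, where the standard off-the-shelf models do not simultaneously satisfy both clauses.
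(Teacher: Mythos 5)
Your decomposition into (i) $\hind\not\Rightarrow\kl$, (ii) $\bpi\not\Rightarrow\hind$, (iii) $\kw\not\Rightarrow\hind$ is sound, and steps (i) and (ii) are carried out correctly and essentially as in the paper ($\mathcal N_2$ with $\hind\wedge\neg\kl$ plus Pincus transfer; $\mathcal N_3$ with $\bpi\wedge\neg\hind$ plus transfer, which is the paper's alternative route for everything except $\kw$). The problem is step (iii), which you leave open after declaring it the main obstacle: this is a genuine gap, and moreover the ``symmetry calculation'' by which you rule out $\mathcal M_1$ as a witness is mistaken. Cohen's basic model is a symmetric extension, not a permutation model with atoms: the elements of the set $A$ of Cohen reals are honest subsets of $\omega$, so finite supports and sets of such reals are themselves hereditarily codeable by sets of ordinals, and the Halpern--L\'evy analysis shows that every set in $\mathcal M_1$ injects into something of the form $[A]^{<\omega}\times\mathrm{Ord}$, hence into $\wp(\alpha)$ for some ordinal $\alpha$. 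In other words, $\kw$ (Form 15) \emph{holds} in $\mathcal M_1$; this is exactly the fact cited in the paper (Halpern--L\'evy, see also Howard--Rubin, p.~146). The heuristic ``a symmetric injection must identify permutation-equivalent elements outside its finite support'' is an argument about Fraenkel--Mostowski models and does not transfer to forcing automorphisms acting on names; it does not obstruct an injection defined, e.g., via least supports, which are invariants that are themselves sets of reals.

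Once you know $\mathcal M_1\vDash\kw$, step (iii) is immediate from Theorem~\ref{prop:basiccohen} ($\hind$ fails in $\mathcal M_1$), with no transfer machinery needed since $\mathcal M_1$ is already a $\zf$ model; this is precisely how the paper argues, and in fact the paper uses $\mathcal M_1$ to dispose of all of $\bpi$, $\oep$, $\kw$, $\op$, $\kl$ at once (your route via $\mathcal N_3$ for (ii) is then an alternative, not a necessity). The speculative construction you sketch at the end of (iii) --- a model whose generic objects are coded as subsets of ordinals from the outset so that $\kw$ is built in while some finite powerset stays Dedekind-finite --- is in spirit a description of $\mathcal M_1$ itself, so the missing witness was already among your candidates; as written, however, the proof of the theorem is incomplete.
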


\begin{proof}
Cohen's Basic Model $\mathcal M_1$ satisfies $\bpi$ and $\kw$ (this follows from~\cite{halpern-levy}, see also~\cite[p. 146]{howard-rubin}). Since $\bpi\Rightarrow\oep\Rightarrow\op\Rightarrow\kl$, and since $\hind$ fails in $\mathcal M_1$ (by Theorem~\ref{prop:basiccohen}), it follows that neither of $\kw$, $\bpi$, $\oep$, $\op$, or $\kl$, imply $\hind$ in $\zf$.

(For all the choice principles mentioned in the statement of the theorem, except $\kw$, one can obtain an alternative argument by considering Mostowski's Linearly Ordered Model $\mathcal N_3$, where $\hind$ fails (Theorem~\ref{prop:mostowskimodel}). This model satisfies $\bpi$, as proved in~\cite[Section 7.1]{jech-choice}, so it suffices to invoke Theorems~\ref{prop:mostowskimodel} and~\ref{pincus-transfer-theorem} to see that $\bpi$ does not imply $\hind$ over $\zf$ (and hence, neither of $\oep$, $\op$, $\kl$ imply $\hind$ either). This argument, however, does not work for $\kw$ since the latter fails in $\mathcal N_3$, see~\cite[pp. 182--183]{howard-rubin}.)

Conversely, consider the Second Fraenkel Model $\mathcal N_2$. By Theorem~\ref{prop:secondfraenkel}, $\mathcal N_2\vDash\hind$. Since the set of atoms $A$ in $\mathcal N_2$ is Dedekind-finite, it follows from Proposition~\ref{konigplushindman} that $\mathcal N_2\vDash\neg\kl$ (alternatively, one can directly see that the countable sequence of pairs that gives rise to the model, $\langle P_n\big|n<\omega\rangle$, constitutes a countable family of nonempty finite sets without a choice function). Note that the formula $\varphi(x)$ stating ``$x=(T,\leq)$ is an infinite, finitely branching tree without infinite branches'' is boundable (equivalent to its relativization to $\wp^{\omega+1}(x)$) and thus $\neg\kl\equiv(\exists x)(\varphi(x))$ is a boundable statement. Hence, by Theorem~\ref{pincus-transfer-theorem}, $\hind$ does not imply $\kl$ in $\zf$. It follows immediately that $\hind$ does not imply neither of $\op$, $\oep$, $\bpi$, or $\kw$ either.
\end{proof}

\subsection{$\hind$ and various flavours of $\rt$}

So far in this paper, we have only considered the version of Ramsey's theorem dealing with partitions of pairs. Variants of this result in other dimensions have, however, also been considered.

\begin{definition}
Given an $n\in\mathbb N\setminus\{1\}$, the symbol $\rt^n$ will denote the statement that, for every infinite set $X$ and every colouring $c:[X]^n\longrightarrow 2$, there exists an infinite $Y\subseteq X$ such that $[Y]^n$ is $c$-monochromatic.
\end{definition}

It is possible to change the above definition to deal with any finite number of colours, but in any case we wind up with an equivalent statement. More precisely, if we let $\rt^n(k)$ (with $n,k\in\mathbb N\setminus\{1\}$) be the statement that for every infinite set $X$ and every colouring $c:[X]^n\longrightarrow k$, there exists an infinite $Y\subseteq X$ such that $[Y]^n$ is $c$-monochromatic, then it is a theorem of Forster and Truss~\cite[Lemma 2.2]{forster-truss} that, for each given $n$, all of the statements $\rt^n(k)$ as $k$ varies are equivalent under $\zf$. The same two authors also establish~\cite[Theorem 2.3]{forster-truss} that, if $n\leq m$, then $\rt^m\Rightarrow\rt^n$ in $\zf$. We still write $\rt$ without exponent to refer to $\rt^2$, and remind the reader that this statement is Form 17 in~\cite{howard-rubin}. The statement $(\forall n)(\rt^n)$, on the other hand, is referred to as Form 325 in~\cite{howard-rubin}.

In~\cite[Definition 2.1]{brot-cao-fernandez}, a set $X$ is defined to be R$^n$-infinite if for every $c:[X]^n\longrightarrow 2$ there exists an infinite $Y\subseteq X$ such that $[Y]^n$ is $c$-monochromatic (that is, if the $n$-dimensional Ramsey's theorem holds at $X$); and of course $X$ is R$^n$-finite if it is not R$^n$-infinite. Hence, $\rt^n$ is simply the statement that every infinite set is R$^n$-infinite, and statements about the veracity or failure of the principles $\rt^n$ can be thought of as statements about certain finiteness classes being equal. Furthermore, the class of R$^n$-finite sets is tame for every $n\in\mathbb N\setminus\{1\}$, and therefore any of the statements $\rt^n$, $\neg\rt^n$, and their combinations (in conjunction) with $\hind$ and $\neg\hind$ are injectively boundable by Theorem~\ref{finiteness-classes-transfer}.

\begin{theorem}\label{rtindepfromhind}
In $\zf$, there is no provable implication relation between $\hind$ and any of the $\rt^n$ ($n\in\mathbb N\setminus\{1\}$), nor between $\hind$ and Form 325.
\end{theorem}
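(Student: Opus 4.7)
The plan is to establish both non-implication directions using the transfer machinery set up earlier. By Forster--Truss~\cite{forster-truss}, $\rt^m\Rightarrow\rt^n$ in $\zf$ whenever $m\geq n\geq 2$, so a single model of $\hind\wedge\neg\rt^2$ witnesses $\hind\not\Rightarrow\rt^n$ for every $n$ and $\hind\not\Rightarrow\text{Form 325}$; dually, a model of $\text{Form 325}\wedge\neg\hind$ handles all the reverse non-implications. By Theorem~\ref{finiteness-classes-transfer} the R$^n$-finite sets form a tame finiteness class for each $n$, so all the conjunctions in question are injectively boundable, and Theorem~\ref{pincus-transfer-theorem} therefore applies.

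For $\hind\wedge\neg\rt^2$, I would work in the Second Fraenkel Model $\mathcal N_2$, where $\hind$ already holds by Theorem~\ref{prop:secondfraenkel}. To show $\rt$ fails at the set of atoms $A=\bigcup_n P_n$, define $c:[A]^2\longrightarrow 2$ by $c(\{x,y\})=0$ iff $\{x,y\}=P_n$ for some $n$; since every permutation in the group defining $\mathcal N_2$ preserves each pair $P_n$ setwise, $c$ is fixed by the whole group and so lies in $\mathcal N_2$. A standard support calculation shows that any symmetric infinite $Y\subseteq A$ with support $E=\bigcup_{i\leq k}P_i$ must satisfy $Y\cap P_j\in\{\varnothing,P_j\}$ for $j>k$ (the swap within $P_j$ belongs to the group and fixes $E$ pointwise), so $Y$ contains infinitely many full pairs; taking any two of them produces elements of both colours inside $[Y]^2$, contradicting monochromaticity.

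For $\text{Form 325}\wedge\neg\hind$, I would use Cohen's Basic Model $\mathcal M_1$, in which $\neg\hind$ already holds by Theorem~\ref{prop:basiccohen}; since $\mathcal M_1$ is itself a $\zf$ model, no transfer is required. What remains is to invoke the classical fact that Form 325 holds in $\mathcal M_1$, which ultimately rests on the Halpern--L\"auchli partition theorem; the precise statement can be extracted from~\cite{howard-rubin} or from the original literature on Ramsey-type consequences of $\bpi$ in the Cohen symmetric extension.

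The main obstacle is this second direction: unlike the first, which is a self-contained permutation-group computation, verifying Form 325 in $\mathcal M_1$ requires non-trivial partition-theoretic analysis of the symmetric submodel arising from Cohen forcing. Were one to avoid external citations entirely, the alternative would be to construct a bespoke permutation model in which Form 325 is arranged to hold while some infinite set has Dedekind-finite finite powerset, and then appeal to Theorem~\ref{pincus-transfer-theorem}; this would involve a careful balance between adding enough symmetry to defeat $\hind$ (e.g., an amorphous-like piece with a $C$-finite powerset) and retaining enough structure to verify all $\rt^n$.
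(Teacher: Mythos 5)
Your first direction is fine and is essentially the paper's argument: $\mathcal N_2\vDash\hind$ (Theorem~\ref{prop:secondfraenkel}) together with the failure of $\rt^2$ at the atoms (your pair-colouring computation is a correct self-contained substitute for the citation of~\cite[Proposition 4.7]{brot-cao-fernandez}), followed by the Pincus transfer.

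The second direction, however, contains a genuine error: Ramsey's theorem does \emph{not} hold in Cohen's Basic Model $\mathcal M_1$, so neither does Form 325, and no appeal to the literature can rescue this route. You can see this from the paper itself: $\rt\Rightarrow\hind_2(2)$ in $\zf$ (every R$^2$-infinite set is H$_2$-infinite, \cite[Theorem 3.8]{brot-cao-fernandez}), while Theorem~\ref{ht2-in-basic-cohen} shows $\hind_2(2)$ fails in $\mathcal M_1$; hence $\rt$ fails there. One can also see it directly by the same symmetry argument: colour a pair $\{x_n,x_m\}$ of Cohen reals by the parity of $\min(x_n\bigtriangleup x_m)$; given a name for an infinite $Y\subseteq X$ with finite support $E$, pick $x_n\in Y$ with $n\notin E$, swap $n$ with a fresh column $m$, and extend the amalgamated condition to decide the first disagreement to be even or odd at will, contradicting homogeneity. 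The Halpern--L\"auchli theorem enters the analysis of $\mathcal M_1$ only through the Halpern--L\'evy proof that $\bpi$ holds there~\cite{halpern-levy}; indeed the whole point of~\cite{blass-rt} is that $\bpi$ does not imply $\rt$, with this very model as witness, so ``$\bpi$-type'' partition results cannot yield Form 325 in $\mathcal M_1$.

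The correct witness for $\text{Form 325}\wedge\neg\hind$ is not a bespoke construction but the First Fraenkel Model $\mathcal N_1$: $\hind$ fails there because the (amorphous) set of atoms is H-finite (Theorem~\ref{prop:firstfraenkel}, via~\cite[Proposition 4.2]{brot-cao-fernandez}), while all $\rt^n$ hold, since the atoms are R$^n$-infinite for every $n$~\cite[Proposition 4.1]{brot-cao-fernandez} and, by Blass's lemma~\cite{blass-rt}, every infinite set in $\mathcal N_1$ contains either a copy of $\omega$ or a copy of a cofinite set of atoms, hence is R$^n$-infinite. One then transfers via Theorems~\ref{finiteness-classes-transfer} and~\ref{pincus-transfer-theorem}; note the small extra point (handled in the paper's footnote) that Form 325 is an infinite conjunction, so to see that $\text{Form 325}\wedge\neg\hind$ is injectively boundable one verifies by hand that the formula ``for every $n$ and every $c:[x]^n\longrightarrow 2$ there is an infinite homogeneous $y\subseteq x$'' is boundable, rather than just citing tameness of each class of R$^n$-finite sets separately.
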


\begin{proof}
Consider the Second Fraenkel Model $\mathcal N_2$. Theorem~\ref{prop:secondfraenkel} establishes that $\mathcal N_2\vDash\hind$; on the other hand, it is shown in~\cite[Proposition 4.7]{brot-cao-fernandez} that the set of atoms in $\mathcal N_2$ is R$^2$-finite. In particular, $\rt$ fails and {\it a fortiori}, so do each of the $\rt^n$ ($n\geq 3$) as well as Form 325. Since $\hind$ and all of the $\neg\rt^n$ are injectively boundable, it follows from Theorem~\ref{pincus-transfer-theorem} that $\hind$ does not imply any of the Ramsey-theorem-related choice principles in $\zf$.

Conversely, consider the First Fraenkel Model $\mathcal N_1$. We know by Theorem~\ref{prop:firstfraenkel} that $\hind$ fails in $\mathcal N_1$. On the other hand, it is established in~\cite[Proposition 4.1]{brot-cao-fernandez} that the set $A$ of atoms is R$^n$-infinite for all $n\geq 2$; furthermore, by~\cite[Lemma, p. 389]{blass-rt}, every non-well-orderable set from $\mathcal N_1$ contains an infinite subset which is in bijection to a cofinite subset of $A$. Hence, every infinite set in $\mathcal N_1$ contains an infinite subset which is either in bijection with $\omega$, or with a cofinite subset of $A$; since both $\omega$ and $A$ are R$^n$-infinite, it follows that every infinite set is R$^n$-infinite in $\mathcal N_1$. Hence, $\mathcal N_1\vDash(\forall n\in\mathbb N\setminus\{1\})(\rt^n)$; since $\neg\hind$ and all of the $\rt^n$ are injectively boundable, it follows that neither of the $\rt^n$ imply, not even jointly (i.e. as Form 325), the principle $\hind$ in $\zf$\footnote{It is clear that, for each individual $n$, the statement $\rt^n\wedge\neg\hind$ is injectively boundable. The statement of Form 325, however, is at first sight a conjunction of {\it all} of the $\rt^n$ simultaneously. In this case, one needs to verify by hand that the formula $\varphi(x)$ stating that ``for every $n<\omega$ and every $c:[x]^n\longrightarrow 2$, there is an infinite $y\subseteq x$ such that $c$ is constant in $[y]^n$'' is boundable (equivalent to its relativization to $\wp^{\omega+1}(x)$) and hence Form 325, which is equivalent to $(\forall x)(\aleph(x)\leq\omega\Rightarrow\varphi(x))$, is an injectively boundable statement.}.
\end{proof}

\subsection{$\hind$ and other choice principles}

We finish the chapter by briefly mentioning how one can obtain further information, regarding the implication relations (or lack thereof) between $\hind$ and a few other known choice principles. One can obtain plenty of information simply based on Theorems~\ref{prop:secondfraenkel} and~\ref{prop:basiccohen}, which state that $\hind$ holds in the Second Fraenkel Model $\mathcal N_2$ and fails in Cohen's basic model $\mathcal M_1$. For example, there is no implication between $\hind$ and any of Choice from Well-Orderable sets $\mathsf{AC}(\infty,\mathrm{WO})$, Choice from finite sets $\mathsf{AC}(\infty,<\aleph_0)$, and Choice from pairs $\mathsf{AC}(\infty,\leq 2)$, since each of these principles holds in $\mathcal M_1$ (for the first one, see~\cite[Exercise 5.22]{jech-choice}; the remaining two are easily consequences of $\op$) and fails in $\mathcal N_2$ (as witnessed by the partition of the set of atoms in pairs giving rise to the model); of course it is also important to notice that the failure of each of these principles is a boundable statement so we are able to use transfer theorems. It is also worth noting that neither the Hahn--Banach theorem nor the Ultrafilter theorem imply $\hind$, not even jointly, since both of these principles follow from $\bpi$; and $\hind$ does not imply either of these principles either, as witnessed by Solovay's model (which satisfies $\mathsf{DC}$, and therefore also $\hind$).

\section{$\hind$ vs. Form 82}\label{ht-vs-form82}

Recall that we established in Corollary~\ref{arrows-ht} that $\hind$ implies Form 82 under $\zf$. The purpose of this section is to prove that this implication is not reversible. Since Form 82 is equivalent to the statement that every C-finite set is finite, and the class of C-finite sets is tame, it follows from Theorems~\ref{finiteness-classes-transfer} and~\ref{pincus-transfer-theorem} that it suffices to build a Fraenkel--Mostowski permutation model of $\zfa$ satisfying Form 82, but not satisfying $\hind$.

For the construction, we begin with a model of $\zf$ with $|A|=\mathfrak c$, and take a bijection $:\omega^\omega\longrightarrow A$, which we denote by $f\longmapsto a_f$. Recall that $\omega^\omega$ is naturally endowed with a metric space structure, given by declaring $d(f,g)$ to be $0$ if $f=g$ and $\displaystyle{\frac{1}{1+\Delta(f,g)}}$ otherwise, where $\Delta(f,g)=\min\{k<\omega\big|f(k)\neq g(k)\}$. For each $s\in\omega^{­<­­\omega}$ we let $U_s=\{f\in\omega^\omega\big|f\upharpoonright|s|=s\}$; this is an open ball in $\omega^\omega$ with radius $\frac{1}{|s|}$, and the collection $\{U_s\big|s\in\omega^{<\omega}\}$ of all such balls forms a basis for the topology in $\omega^\omega$ induced by the aforementioned metric.

We consider the group $\mathscr G$ consisting of all permutations of $A$ that are induced by isometries of $\omega^\omega$; that is, $\pi\in\mathscr G$ if and only if there exists an isometry $\varphi:\omega^\omega\longrightarrow\omega^\omega$ such that $\pi(a_f)=a_{\varphi(f)}$, in which case we will denote $\pi=\pi_\varphi$. Note that every isometry must map each of the basic open sets $U_s$ to some $U_t$ satisfying $|s|=|t|$. Hence, any such isometry gives rise to, and is entirely determined by, an ``assembly'' of permutations $\langle\varphi_s:\omega\longrightarrow\omega\big|s\in\omega^{<\omega}\rangle$ such that, for each $s\in\omega^{<\omega}$, if $\varphi[U_s]=U_t$ then $\varphi[U_{s\frown n}]=U_{t\frown\varphi_s(n)}$ for all $n<\omega$.

We now proceed to define a filter on $\mathscr G$. For each $n<\omega$ and finite $F\subseteq\omega^\omega$, we define
\begin{equation*}
G_{n,F}=\{\pi_\varphi\big|(\forall f\in F)(\varphi(f)=f)\text{ and }(\forall s\in\omega^n)(\varphi[U_s]=U_s)\}
\end{equation*}
In other words, $G_{n,F}$ consists of all $\pi_\varphi$ where, if $\varphi$ is determined by the assembly of permutations $\langle\varphi_s\big|s\in\omega^{<\omega}\rangle$, then we have for all $k\leq n$ and for all $s\in\omega^k$ that $\varphi_s$ is the identity permutation, and furthermore, for all $k<\omega$ and all $f\in F$ it is the case that $\varphi_{f\upharpoonright k}(f(k))=f(k)$. (Note that $G_{0,\varnothing}=\mathscr G$.) It is easily verified that, for $n,m<\omega$ and finite $E,F\subseteq\omega^\omega$, we have $G_{n,F}\cap G_{m,E}=G_{\max\{n,m\},E\cup F}$, and so the family $\{G_{n,F}\big|n<\omega\text{ and }F\subseteq\omega^\omega\text{ is finite}\}$ generates a filter of subgroups of $\mathscr G$, which we will denote with $\mathscr F$. Furthermore, $\mathscr F$ contains the stabilizer of each atom (the stabilizer of $a_f$ is the subgroup $G_{0,\{f\}}$), and is closed under conjugates (since given $\pi_\varphi\in\mathscr G$, we have $\pi_\varphi^{-1}G_{n,F}\pi_\varphi=G_{n,\varphi^{-1}[F]}$). The filter $\mathscr F$ is therefore a normal filter of subgroups of $\mathscr G$, as defined in~\cite[Chap. 4]{jech-choice}, and so the class $M(A,\mathscr F,\mathscr G)$ of hereditarily symmetric (with respect to this filter and group) sets satisfies $\zfa$.

\begin{lemma}\label{nohindmaninnopalmodel}
In $M(A,\mathscr F,\mathscr G)$, the set $A$ is not H-infinite.
\end{lemma}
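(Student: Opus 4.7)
By Proposition~\ref{equiv-hindman}, showing that $A$ is not H-infinite in $M(A,\mathscr{F},\mathscr{G})$ amounts to showing that $[A]^{<\omega}$ is Dedekind-finite there. I will therefore assume, toward a contradiction, that some injection $h\colon\omega\longrightarrow[A]^{<\omega}$ lies in the model, pick a basic subgroup $G_{n,F}$ in the generating family of $\mathscr{F}$ that supports $h$, and derive a contradiction by showing that only finitely many finite subsets of $A$ are supported by $G_{n,F}$. Since every $\pi\in G_{n,F}$ satisfies $\pi(h)=h$ and fixes each natural number, we have $\pi(h(k))=h(k)$ for all $k<\omega$, so every $h(k)$ is itself a finite subset of $A$ supported by $G_{n,F}$, and a uniform finite bound on such subsets will contradict the injectivity of $h$.

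The bound is obtained from the following orbit-analysis claim, which is the heart of the argument: for every $g\in\omega^{\omega}\setminus F$, the orbit of the atom $a_{g}$ under $G_{n,F}$ is infinite. Once this is established, any finite $S\subseteq A$ supported by $G_{n,F}$ must consist of atoms with finite $G_{n,F}$-orbit (since $G_{n,F}$ permutes $S$ setwise), so $S\subseteq\{a_{f}\mid f\in F\}$; there are at most $2^{|F|}$ such subsets, yielding the desired contradiction.

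The main technical obstacle is thus the orbit-analysis claim, and for it I plan to exploit the assembly description of isometries spelled out in the paragraph preceding the definition of the filter. Membership of $\pi_{\varphi}$ in $G_{n,F}$ translates into: $\varphi_{s}=\id$ for all $s\in\omega^{<\omega}$ with $|s|<n$, and $\varphi_{f\upharpoonright k}(f(k))=f(k)$ for every $f\in F$ and every $k\geq n$. Given $g\notin F$, I will first choose $K\geq n$ large enough that $g\upharpoonright(K+1)$ differs from $f\upharpoonright(K+1)$ for every $f\in F$; at level $K$, the only constraints on $\varphi_{g\upharpoonright K}$ come from values $f(K)$ with $f\upharpoonright K=g\upharpoonright K$, and by the choice of $K$ each such $f(K)$ is different from $g(K)$. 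Consequently, $\varphi_{g\upharpoonright K}$ may map $g(K)$ to any element of $\omega$ outside a prescribed finite set, and declaring all remaining $\varphi_{s}$ to be the identity produces isometries in $G_{n,F}$ sending $a_{g}$ to infinitely many distinct atoms, completing the argument.
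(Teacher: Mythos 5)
Your proposal is correct and follows essentially the same route as the paper: reduce H-finiteness of $A$ to Dedekind-finiteness of $[A]^{<\omega}$, fix a supporting group $G_{n,F}$, and construct level-by-level isometries lying in $G_{n,F}$ that fix $F$ pointwise and the level-$n$ cells while moving any atom $a_g$ with $g\notin F$. The only (harmless) difference is the final step: the paper moves one such atom out of a single set $A_k$ of the assumed sequence to contradict its invariance, whereas you use the infinite-orbit claim to bound the number of $G_{n,F}$-invariant finite subsets of $A$ by $2^{|F|}$ and contradict injectivity; both hinge on the same isometry construction.
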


\begin{proof}
Suppose, on the contrary, that there exists within $M(A,\mathscr F,\mathscr G)$ a countable injective sequence $\langle A_m\big|n<\omega\rangle$ with each $A_m$ a finite subset of $A$. Let $n<\omega$ and $F\in[A]^{<\omega}$ be such that the enumeration of this sequence is fixed by the elements of $G_{n,F}$. Since the $A_m$ are mutually distinct, there is a $k<\omega$ such that $A_k\not\subseteq F$, so we may pick an $f\in\omega^\omega$ such that $a_f\in A_k\setminus F$, pick $K>\max\{n\}\cup\{\Delta(f,g)\big|g\in F\}$, and let $f'\in U_{f\upharpoonright K}\setminus\{g\in\omega^\omega\big|a_g\in A_k\}$ (this can be done because $A_k$ is finite). Then one can find an isometry $\varphi:\omega^\omega\longrightarrow\omega^\omega$ fixing all $U_s$ for $s\in\omega^n$, fixing each element of $F$, and mapping $f$ to $f'$. Thus, the permutation $\pi_\varphi\in G_{n,F}$ maps $a_f$ to $a_{f'}$ and consequently does not fix $A_k$, contradicting the fact that it fixes the sequence $\langle A_m\big|m<\omega\rangle$.
\end{proof}

In particular, we have that $M(A,\mathscr F,\mathscr G)\vDash\neg\hind$. The remainder of the section is devoted to proving that every infinite set in $M(A,\mathscr F,\mathscr G)$ is C-infinite, and hence this model satisfies Form 82. We begin by considering subsets of $A$.

\begin{lemma}\label{atomsarecinf}
Working in $M(A,\mathscr F,\mathscr G)$, let $X\subseteq A$ be infinite. Then $X$ is C-infinite.
\end{lemma}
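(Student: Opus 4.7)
The plan is to analyze an arbitrary infinite $X \subseteq A$ in $M := M(A,\mathscr{F},\mathscr{G})$ externally via the ground-model bijection $f \mapsto a_f$; I set $X' := \{f \in \omega^\omega : a_f \in X\}$ (an infinite set in the ground model) and fix a support $G_{n,F}$ of $X$. My strategy is to locate a level $m \geq n$ at which $X'$ meets infinitely many of the basic balls $U_s$ with $|s|=m$, and then define the desired surjection $X \to \omega$ via the coordinate restriction $a_f \mapsto f\upharpoonright m$ composed with an enumeration of the relevant level.

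The crux is showing that such an $m$ exists. Suppose toward a contradiction that $\{s \in \omega^m : U_s \cap X' \neq \varnothing\}$ is finite for every $m \geq n$, and set $T = \{s \in \omega^{<\omega} : |s|\geq n \text{ and } U_s \cap X' \neq \varnothing\}$. Then $T$ is closed under the natural action of $G_{n,F}$ on $\omega^{<\omega}$ (where $\varphi \cdot s = t$ iff $\varphi[U_s]=U_t$). The key observation is that, for every $s \in T$, the group of permutations of the children $\{s\frown k : k \in \omega\}$ induced by $\{\pi_\varphi \in G_{n,F} : \varphi[U_s]=U_s\}$ is precisely the full pointwise stabilizer in $\mathrm{Sym}(\omega)$ of the finite set $P_s := \{h(|s|) : h \in F,\; h\upharpoonright|s|=s\}$; this is visible by constructing assemblies that are the identity away from $s$. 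Since this group acts transitively on $\omega \setminus P_s$, any finite invariant subset of $\omega$ is contained in $P_s$. Applying this to the (by assumption finite) slice $\{k \in \omega : s\frown k \in T\}$ and iterating on $m > n$, I conclude $T \cap \omega^m \subseteq \{h\upharpoonright m : h \in F\}$ for all $m > n$. Consequently, every $f \in X'$ shares each sufficiently long initial segment with some $h \in F$, and the finiteness of $F$ then forces $X' \subseteq F$, contradicting the infinitude of $X'$.

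Having obtained $m$, let $S := \{s \in \omega^m : U_s \cap X' \neq \varnothing\}$ and fix any ground-model bijection $\iota : S \to \omega$; define $\phi : X \to \omega$ by $\phi(a_f) = \iota(f\upharpoonright m)$. The map $\phi$ is surjective by the very definition of $S$. To verify that $\phi \in M$ with support $G_{m,F}$, observe that each ordered pair $(a_f,\iota(f\upharpoonright m))$ is hereditarily symmetric (the second coordinate is a ground-model integer), and for every $\pi_\varphi \in G_{m,F}$ the defining condition $\varphi[U_s]=U_s$ at level $m$ gives $\varphi(f)\upharpoonright m = f\upharpoonright m$, so $\pi_\varphi$ carries each pair of $\phi$ to another pair of $\phi$. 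I expect the main obstacle to be the tree step of the second paragraph — specifically, nailing down that the induced permutation group on the children of $s$ is the full pointwise stabilizer of $P_s$ — after which the construction of $\phi$ and the verification that it belongs to $M$ are essentially bookkeeping.
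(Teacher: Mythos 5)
Your proof is correct, but it follows a genuinely different route from the paper's. The paper proves the stronger structural fact that any infinite $X\subseteq A$ lying in $M(A,\mathscr F,\mathscr G)$ must contain an entire basic ball $A_s$: fixing a support $G_{n,F}$ and a point $a_f\in X$ with $f\notin F$, one picks $m$ beyond $n$ and beyond all $\Delta(f,g)$ for $g\in F$, and then every $f'\in U_{f\upharpoonright m}$ is the image of $f$ under an admissible isometry, so $A_{f\upharpoonright m}\subseteq X$; separately, each ball $A_s$ is C-infinite because the injective sequence $\langle A_{s\frown k}\mid k<\omega\rangle$ of its sub-balls is supported by $G_{|s|+1,\varnothing}$, making $\wp(A_s)$ (hence $\wp(X)$) Dedekind-infinite. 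You instead establish the weaker but sufficient dichotomy: either some level $m\geq n$ sees the trace $X'$ meet infinitely many balls $U_s$ with $|s|=m$ --- in which case the restriction map $a_f\mapsto f\upharpoonright m$, composed with a (pure-set) enumeration, is itself a surjection of $X$ onto $\omega$ that is fixed by $G_{m,F}$ --- or else a tree argument applies, whose engine is your computation that the permutations induced on the children of a node $s$ (with $|s|\geq n$) by elements of $G_{n,F}$ fixing $U_s$ form exactly the pointwise stabilizer of the finite footprint $P_s$ of $F$ at $s$; transitivity off $P_s$ then forces each finite invariant slice into $P_s$, collapsing the trace into $F$ and contradicting infinitude. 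Both arguments exploit the same homogeneity of the isometry group relative to a finite support and finitely many fixed levels, but the decompositions differ: the paper's route is shorter and yields the stronger conclusion that $X$ contains a ball (and it routes C-infinity through the Dedekind-infinitude of the power set), while yours constructs the required surjection onto $\omega$ directly, at the cost of the more delicate identification of the induced action on children; the ``iteration'' step and the verification that $G_{m,F}\subseteq G_{n,F}$ fixes your map are exactly the bookkeeping you anticipated, and they do go through.
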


\begin{proof}
Begin by noticing that, for each $s\in\omega^{<\omega}$, the set $A_s=\{a_f\big|f\in U_s\}$ belongs to $M(A,\mathscr F,\mathscr G)$, since this set is fixed by all elements of $G_{|s|,\varnothing}$. Furthermore, each permutation in $G_{|s|+1,\varnothing}$ fixes each of the sets $A_{s\frown n}=\{a_f\big|f\in U_{s\frown n}\}$; this implies that the (injective) sequence $\langle A_{s\frown n}\big|n<\omega\rangle$, consisting of subsets of $A_s$, also belongs to $M(A,\mathscr F,\mathscr G)$. This shows that, in $M(A,\mathscr F,\mathscr G)$, each of the sets $A_s$ is C-infinite (in particular, for $s=\varnothing$ we see that $A=A_\varnothing$ is C-infinite). Thus, to prove the lemma it suffices to show that, if $X\subseteq A$ is infinite and belongs to $M(A,\mathscr F,\mathscr G)$, then $A_s\subseteq X$ for some $s\in\omega^{<\omega}$.

To show this, suppose $X\subseteq A$ is infinite and belongs to $M(A,\mathscr F,\mathscr G)$. Let $n<\omega$ and $F\in[A]^{<\omega}$ be such that each element of $G_{n,F}$ fixes $X$. Since $X$ is infinite, we may find an $f\notin F$ such that $a_f\in X$. Pick an $m>\max\{\Delta(f,g)\big|g\in F\}\cup\{n\}$. Notice, then, that for each $f'\in U_{f\upharpoonright m}$ it is possible to find an isometry $\varphi:\omega^\omega\longrightarrow\omega^\omega$ such that $\varphi[U_s]=U_s$ for $s\in\omega^m$, $\varphi(g)=g$ for $g\in F$, and $\varphi(f)=f'$. Since each such isometry $\varphi$ satisfies $\pi_\varphi\in G_{n,F}$, we conclude that $a_{f'}=a_{\varphi(f)}=\pi_\varphi(a_f)\in X$. Therefore $A_{f\upharpoonright m}\subseteq X$.
\end{proof}

We are now in conditions to finish our proof.

\begin{theorem}\label{nopalmodelsatisfies82}
The model $M(A,\mathscr F,\mathscr G)$ satisfies Form 82.
\end{theorem}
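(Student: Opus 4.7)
I would show that every infinite $X$ in $M := M(A,\mathscr{F},\mathscr{G})$ is C-infinite, i.e., admits an $M$-internal surjection onto $\omega$. The strategy is to reduce to Lemma~\ref{atomsarecinf}, which treats the case $X \subseteq A$, by extracting atomic data from $X$.

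First I would fix a support $H = G_{n,F}$ of $X$ and consider the set $\mathcal{A}(X) := \bigcup_{x \in X}(\mathrm{tcl}(\{x\}) \cap A)$ of atoms appearing in the hereditary transitive closure of some element of $X$. This set lives in $M$ (it is supported by $H$), and I would split on whether it is finite. If $\mathcal{A}(X)$ is finite, then each $x \in X$ is hereditarily built from a fixed finite atom set $A_0 := \mathcal{A}(X)$; using the natural lex order on $\omega^\omega$ to well-order $A_0$ in $M$ and lifting this well-ordering through the cumulative hierarchy, one sees that $X$ is well-orderable in $M$, and being infinite it is Dedekind-infinite and in particular C-infinite.

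If $\mathcal{A}(X)$ is infinite, then (by the argument inside the proof of Lemma~\ref{atomsarecinf}) there is some $s \in \omega^{<\omega}$ with $A_s \subseteq \mathcal{A}(X)$, and I would define the $M$-internal map $\psi : X \to \wp(\omega)$ by
\[
\psi(x) = \{k < \omega \,:\, \mathrm{tcl}(\{x\}) \cap A_{s \frown k} \neq \emptyset\}.
\]
By construction $\bigcup_{x \in X}\psi(x) = \omega$. The essential observation is that $\wp(\omega)^M = \wp(\omega)^V$ is well-orderable in $M$, since subsets of $\omega$ involve no atoms and the ground-model well-ordering therefore descends to $M$. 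Consequently, if $\psi[X]$ is infinite, it is a well-orderable infinite set in $M$, hence Dedekind-infinite, and composing $\psi$ with a surjection $\psi[X] \to \omega$ yields the desired surjection $X \to \omega$ in $M$.

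The main obstacle is the residual sub-case where $\psi[X]$ is finite. Then $X$ partitions into finitely many fibers under $\psi$, at least one of which, call it $Y$, is infinite, and the elements of $Y$ share a common atom-occupancy pattern at the level just beyond $s$. I would address this by iteratively refining $\psi$ at deeper levels $s' \supsetneq s$ of the tree $\omega^{<\omega}$, applied to $Y$ in place of $X$. Either some iteration produces an infinite image at a finite stage (closing the argument by the previous paragraph), or the refinement continues indefinitely; in the latter event a compactness-type argument---using the infinitude of $\mathcal{A}(Y)$ combined with the finiteness of each refined image---should force $\mathcal{A}(Y)$ to collapse eventually to a finite set, reducing $Y$ to the well-orderable case. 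Making this recursion precise, and verifying throughout that all intermediate objects (the fibers, the refined maps, the auxiliary choices at each level) are symmetric and hence live in $M$, is the technical heart of the argument.
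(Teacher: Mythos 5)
Your reduction works only in the two easy branches, and the place you yourself flag as ``the technical heart'' --- the residual sub-case where every refined image is finite --- is exactly where the proof breaks down. The proposed dichotomy (either some refinement has infinite image, or $\mathcal A(Y)$ eventually collapses to a finite set) is false. Consider, inside $M(A,\mathscr F,\mathscr G)$, the set $X$ of all cofinite subsets of some fixed $A_s$: this set is symmetric (supported by $G_{|s|,\varnothing}$), it is infinite, every element has the same atom-trace $\mathrm{tcl}(\{x\})\cap A=$ a cofinite subset of $A_s$, and since a cofinite subset of $A_s$ meets every cell $A_{s'\frown k}$ with $s'\supseteq s$, your map $\psi$ and \emph{all} of its refinements are constant on $X$. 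So the recursion continues forever with singleton images while $\mathcal A(Y)=A_s$ stays infinite, and $Y=X$ never becomes well-orderable in $M$ (a well-ordering would be fixed by some $G_{n,F}$, hence so would each element of $X$, which fails because atoms outside $F$ can be moved within deep cells). This $X$ \emph{is} C-infinite (e.g.\ via $C\mapsto|A_s\setminus C|$), but for reasons your invariant --- which only records which cells the transitive closure meets --- cannot see: distinct elements of a set in this model may have identical atom-traces at every level, so no amount of cell-refinement separates them, and no compactness phenomenon forces the atom set to become finite. A separate, smaller issue: in the finite-$\mathcal A(X)$ branch, ``lifting a well-order of $A_0$ through the cumulative hierarchy'' is not a valid justification; the correct argument is that every element of $X$ is then fixed by the single group $G_{0,F_0}$ (with $F_0$ the functions coding $A_0$), so a ground-model well-ordering of $X$ is itself symmetric and lies in $M$.

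The paper's proof avoids atom-traces altogether and analyzes supports and orbits, which is what handles examples like the one above. Given $G_{n,F}$ fixing $X$, it splits according to whether some $x\in X$ needs atoms beyond $F$ in its support. If so, taking $F'\supseteq F$ of least cardinality supporting $x$ (together with some level) and $f\in F'\setminus F$, one forms $h=\{(\pi(a_f),\pi(x))\mid\pi\in G_{k,F'\setminus\{f\}}\}$, checks it is a function into $X$, and shows that on each sphere $A_j=\{a_g\mid\Delta(f,g)=j\}$ it is either injective --- so $X$ contains a bijective copy of a C-infinite set of atoms (Lemma~\ref{atomsarecinf}) --- or constant with value $x$, which forces $G_{k,F'\setminus\{f\}}$ to fix $x$ and contradicts the minimality of $F'$. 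If instead every $x\in X$ is fixed by some $G_{m,F}$ (same $F$), then each $G_{n,F}$-orbit is fixed pointwise by a single group of the filter, hence well-orderable in $M$; since $X$ is not well-orderable there are infinitely many orbits, and the set of orbits is an infinite well-orderable subset of $\wp(X)$, so $\wp(X)$ is Dedekind-infinite, i.e.\ $X$ is C-infinite. To complete your argument you would need something of this nature (an orbit/support analysis) in the residual case; the refinement-plus-compactness scheme as proposed cannot be repaired merely by iterating deeper in $\omega^{<\omega}$.
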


\begin{proof}
Working in $M(A,\mathscr F,\mathscr G)$, let $X$ be an arbitrary infinite set. If $X$ is well-orderable, then it is C-infinite, so we may assume without loss of generality that $X$ is not well-orderable. This means that for no $n<\omega$ and for no $F\in[A]^{<\omega}$ can the permutations of $G_{n,F}$ simultaneously fix all elements of $X$. Let $n<\omega$ and $F\in[A]^{<\omega}$ be such that $G_{n,F}$ fixes $X$. We divide the proof in two cases:
\begin{description}
\item[Case 1]Suppose there is an $x\in X$ such that for no $m\geq n$ is it the case that $G_{m,F}$ fixes $x$. In this case, pick an $F'\in[A]^{<\omega}\setminus\{F\}$, with $F\subseteq F'$, of least possible cardinality such that $G_{m,F'}$ fixes $x$ for some $m<\omega$. Fix such $m$, assuming without loss of generality that $m\geq n$. Choose $f\in F'\setminus F$, and choose $k>\max\left(\{m\}\cup\{\Delta(f,g)\big|g\in F'\}\right)$. Consider the set
\begin{equation*}
h=\{(\pi(a_f),\pi(x))\big|\pi\in G_{k,F'\setminus\{f\}}\}.
\end{equation*}
This set is clearly fixed by all permutations in $G_{k,F'\setminus\{f\}}$ and therefore belongs to $M(A,\mathscr F,\mathscr G)$. Furthermore, we claim that $h$ is a function. To see this, suppose that we have two elements $\pi_\varphi,\pi_\psi\in G_{k,F'\setminus\{f\}}$ such that $\pi_\varphi(a_f)=\pi_\psi(a_f)$. This means $\varphi(f)=\psi(f)$, and thus $\varphi^{-1}\psi(f)=f$. Hence, $\varphi^{-1}\psi\in G_{k,F'}\subseteq G_{m,F'}$ and so $\pi_\varphi^{-1}(\pi_\psi(x))=\pi_{\varphi^{-1}\psi}(x)=x$, and therefore $\pi_\varphi(x)=\pi_\psi(x)$. Note also that, since $G_{k,F'\setminus\{f\}}\subseteq G_{n,F}$, the range of $h$ is a subset of $X$.

Now, for each $j\geq k$, notice that the set $A_j=\{a_g\in A\big|\Delta(f,g)=j\}$ belongs to $M(A,\mathscr F,\mathscr G)$ (as it is fixed by all elements in $G_{0,\{f\}}$), and let us consider the restricted function $h\upharpoonright A_j$. The argument breaks into two subcases:
\begin{description}
\item[Subcase 1.A] There exists a $j\geq k$ such that $h\upharpoonright A_j$ is injective. Then the set $h[A_j]$ is in bijection with $A_j$; since the latter is C-infinite by Lemma~\ref{atomsarecinf}, we conclude that so is $h[A_j]$. Since $h[A_m]\subseteq X$, it follows that $X$ is C-infinite. 

\item[Subcase 1.B] For every $j\geq k$, the function $h\upharpoonright A_j$ fails to be injective. We claim that, in this case, each of the functions $h\upharpoonright A_j$ is in fact a constant function, with constant value $x$. To see this fix any $j\geq k$, and pick two permutations $\pi,\rho\in G_{k,F'\setminus\{f\}}$ such that $\pi(a_f),\rho(a_f)\in A_j$, $\pi(a_f)\neq\rho(a_f)$, and $\pi(x)=\rho(x)$. Then $\rho^{-1}\pi(a_f)\in A_j\setminus\{a_f\}$  and $\rho^{-1}\pi(x)=x$. We conclude that there is a $\sigma\in G_{k,F'\setminus\{f\}}$ such that $\sigma(a_f)\in A_j\setminus\{a_f\}$ and $\sigma(x)=x$.

Now take an arbitrary $g\in\omega^\omega$ such that $\Delta(g,f)=j$ (that is to say, take an arbitrary $a_g\in A_j$). Then one can find a $\tau\in G_{k,F'}$ such that $\tau(\sigma(a_f))=a_g$; thus we have $\tau(x)=x$. Notice, then, that $\tau\sigma\in G_{k,F'\setminus\{f\}}$, and so $h(\tau\sigma(a_f))=\tau\sigma(x)$. But $\tau(\sigma(a_f))=a_g$ and $\tau(\sigma(x))=\tau(x)=x$, thus $h(a_g)=x$ and, $a_g$ being arbitrary in $A_j$, we conclude that $h\upharpoonright A_j$ is a constant function with constant value $x$.

Note that, given any $\pi\in G_{k,F'\setminus\{f\}}$, we must have, if $\pi(a_f)=a_g$, that $\Delta(f,g)\geq k$. In other words, $\pi(a_f)\in A_j$ for some $j\geq k$. Then, $x=h(\pi(a_f))=\pi(x)$. The conclusion is that $G_{k,F'\setminus\{f\}}$ fixes $x$, contrary to the assumption about $F'$ being of least possible cardinality.
\end{description}

\item[Case 2]Suppose the assumption from Case 1 does not hold. Then, for every $x\in X$, there is an $m\geq n$ such that $G_{m,F}$ fixes $x$. Since $G_{n,F}$ fixes $X$ setwise, we have an action of $G_{n,F}$ on the set $X$, and therefore $X$ can be written as the disjoint union of orbits. Each of these orbits is of the form $\mathcal O(x)=\{\pi(x)\big|\pi\in G_{n,F}\}$, and is fixed by $G_{n,F}$ ---hence it belongs to $M(A,\mathscr F,\mathscr G)$. We begin by showing that every orbit is, in fact, well-orderable in $M(A,\mathscr F,\mathscr G)$. To see this, take an arbitrary $x\in X$, and let $m\geq n$ be such that $G_{m,F}$ fixes $x$. Then we claim that $G_{m,F}$ fixes every element of $\mathcal O(x)$: for if $\pi(x)\in\mathcal O(x)$, for some $\pi\in G_{n,F}$, and $\rho\in G_{m,F}$ is arbitrary, then a routine calculation shows that $\pi^{-1}\rho\pi\in G_{m,F}$, and therefore $\pi^{-1}\rho\pi(x)=x$, thus $\rho(\pi(x))=\pi(x)$. Since a single element of the filter $\mathscr F$ fixes all elements of $\mathcal O(x)$, a routine argument shows that any well-ordering of $\mathcal O(x)$ ``from the real world'' must also belong to $M(A,\mathscr F,\mathscr G)$ ---fixed by $G_{m,F}$.

In particular, this implies that $X$ cannot be covered with finitely many orbits, since the disjoint union of finitely many well-orderable sets is well-orderable in $\zf$, contrary to our assumption about $X$. Now, the set $\mathscr O=\{\mathcal O(x)\big|x\in X\}\subseteq\wp(X)$ belongs to, and is well-orderable in, $M(A,\mathscr F,\mathscr G)$ ---it and each of its elements being fixed by $G_{n,F}$. In particular, it is Dedekind-infinite, and hence so is $\wp(X)$, which means that $X$ is C-infinite, and we are done.
\end{description}
This finishes the proof.
\end{proof}

\begin{corollary}
In $\zf$, Form 82 does not imply $\hind$.
\end{corollary}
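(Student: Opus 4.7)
The plan is largely bookkeeping, since the heavy lifting has already been done in this section. First, Lemma~\ref{nohindmaninnopalmodel} exhibits $A$ itself as an infinite H-finite set inside $M(A,\mathscr F,\mathscr G)$, so $\hind$ fails there; Theorem~\ref{nopalmodelsatisfies82} shows that every infinite set of $M(A,\mathscr F,\mathscr G)$ is C-infinite, so Form 82 holds. Consequently $M(A,\mathscr F,\mathscr G)$ is a model of $\zfa$ in which the conjunction $\text{Form 82}\wedge\neg\hind$ is true, which is all the model-theoretic input we need.

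The remaining task is to move from $\zfa$ to $\zf$. For this I would invoke the transfer machinery set up in Section~\ref{consistency-results}. Form 82 is equivalent to the statement that the tame finiteness class of C-finite sets equals the class of finite sets, and $\hind$ is equivalent (by Proposition~\ref{equiv-hindman}) to the statement that the tame finiteness class of H-finite sets equals the class of finite sets; so by Theorem~\ref{finiteness-classes-transfer} both Form 82 and $\neg\hind$ are injectively boundable, and therefore so is their conjunction. Applying Theorem~\ref{pincus-transfer-theorem} to $M(A,\mathscr F,\mathscr G)$ produces a model of $\zf$ in which $\text{Form 82}\wedge\neg\hind$ holds, and the corollary follows immediately.

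Since the construction of the permutation model and the verifications of $\neg\hind$ and Form 82 in it were the substantial steps, there is no serious obstacle left at this stage; the only points to double-check are that the two tame-finiteness-class formulations are genuinely what Theorem~\ref{finiteness-classes-transfer} requires, and that Theorem~\ref{pincus-transfer-theorem} covers the plain conjunction of two injectively boundable statements (with no extra $\op$, $\bpi$, $\dc$, $\cc$ clause needed here). Both are straightforward from the formulations already in the paper.
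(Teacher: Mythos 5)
Your argument is correct and is essentially the paper's own proof: both cite Lemma~\ref{nohindmaninnopalmodel} and Theorem~\ref{nopalmodelsatisfies82} for the model $M(A,\mathscr F,\mathscr G)$, express Form 82 and $\neg\hind$ as (in)equalities of tame finiteness classes so that Theorem~\ref{finiteness-classes-transfer} makes them injectively boundable, and transfer via Theorem~\ref{pincus-transfer-theorem}. (Only a cosmetic point: the equivalence of $\hind$ with ``$\mathrm{Fin}=\mathrm{H}$-$\mathrm{Fin}$'' is immediate from the definition of H-finiteness rather than from Proposition~\ref{equiv-hindman}, but nothing in your argument depends on this.)
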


\begin{proof}
By Lemma~\ref{nohindmaninnopalmodel}, the model $M(A,\mathscr F,\mathscr G)$ fails to satisfy $\hind$; this, coupled with Theorems~\ref{nopalmodelsatisfies82},~\ref{finiteness-classes-transfer}, and~\ref{pincus-transfer-theorem}, finishes the proof.
\end{proof}

\section{A weaker Boolean form of $\hind$}\label{section-boolean}

In~\cite{brot-cao-fernandez}, certain variations of H-finite sets are considered. Recall that, given a set $X$, one can equip the set $[X]^{<\omega}$ with the symmetric difference as a binary operation, in order to obtain an Abelian group (in which each element has order 2, hence this is usually called a {\em Boolean group}). In this group, given a family $Y\subseteq X$ one can consider its set of {\em finite sums}, $\fs(Y)=\{y_1+\ldots+y_m\big|m\in\mathbb N\wedge y_1,\ldots,y_m\in Y\}$, where $+$ denotes the Abelian group operation ---the symmetric difference. Note that, for a pairwise disjoint family $Y\subseteq[X]^{<\omega}$, $\fu(Y)=\fs(Y)$ and so one could consider Hindman's finite unions theorem $\hind$ as a special case of an analogous statement on which one obtains a monochromatic $\fs(Y)$ without requiring that $Y$ is a pairwise disjoint family. It turns out that this analogous statement is equivalent to the original one; however, one obtains strictly weaker statements if one starts restricting the number of summands allowed in our finite sums.

\begin{definition}
Given a set $X$, a family $Y\subseteq[X]^{<\omega}$, and an $n\in\mathbb N$, we let $\fs_{\leq n}(Y)=\{F_1\bigtriangleup\cdots\bigtriangleup F_t\big|t\leq n\text{ and }F_1,\ldots,F_t\in Y\}$. For $n,k\in\mathbb N$, we let $\hind_n(k)$ denote the statement that for every infinite set $X$ and every colouring $c:[X]^{<\omega}\longrightarrow k$, there exists an infinite $Y\subseteq[X]^{<\omega}$ such that $\fs_{\leq n}(Y)$ is $c$-monochromatic.
\end{definition}

Hence, $\hind_n(k)$ denotes Hindman's theorem for $k$ colours and finite sums of at most $n$ summands. For a fixed $k$, it follows\footnote{In the two references that follow, what was really proved is the case $k=2$, but it is clear from a cursory reading of the proof that this can be adapted to any $k$.} from~\cite[Theorem 3.2]{brot-cao-fernandez} that $\hind$ is equivalent to $\hind_n(k)$ whenever $n\geq 4$ and $k$ is arbitrary, and $\hind\Rightarrow\hind_3(k)\Rightarrow\hind_2(k)$.

Looking at~\cite[Corollary 4.16]{brot-cao-fernandez} and the subsequent discussion, as well as~\cite[Question 5.1 (1)]{brot-cao-fernandez}, it becomes apparent that $\hind_3(k)$ seems to be very close to $\hind$, and for all we know these two principles could very well turn out to be equivalent over $\zf$.

\begin{question}
Given a fixed $k\geq 2$, is the principle $\hind_3(k)$ equivalent to $\hind$ over $\zf$?
\end{question}

In this section, we study the strength of the principle $\hind_2(k)$, where $k$ is fixed but arbitrary. Although we do not know whether the $\hind_2(k)$ are equivalent for distinct $k$ (of course, it is clear that $\hind_2(k+1)\Rightarrow\hind_2(k)$ for each $k\geq 2$), for the considerations in this paper it does not make a difference which specific $k$ we have fixed, and so we will uniformly study all of the $\hind_2(k)$. Following~\cite[Definition 3.1, Corollary 3.5, Definition 3.6]{brot-cao-fernandez}, we will say that $X$ is H$_2(k)$-infinite if for every $c:[X]^{<\omega}\longrightarrow k$ there exists an infinite $Y\subseteq[X]^{<\omega}$ such that $\fs_{\leq 2}(Y)$ is $c$-monochromatic, and $X$ is H$_2(k)$-finite if it is not H$_2(k)$-infinite. So $\hind_2(k)$ is simply the statement that every infinite set is H$_2(k)$-infinite, and hence both $\hind_2(k)$ and its negation are injectively boundable statements (since the class of H$_2(k)$-finite sets is a tame finiteness class).

Since every R$^2$-infinite set is H$_2$-infinite by~\cite[Theorem 3.8]{brot-cao-fernandez}, we have $\rt\Rightarrow\hind_2(k)$, for any $k\geq 2$. Hence, either of $\hind$ and $\rt$ (and {\it a fortiori} also any $\rt^n$, $n>2$, as well as Form 325) both imply $\hind_2(k)$ for every $k\geq 2$. We now proceed to precisely locate $\hind_2(k)$ among the various choice principles considered in this paper. The information obtained can be seen in the enhanced diagram from Fig.~\ref{fig:enhanced-diagram} (with the main information being the arrows {\it not} shown in the diagram, cf. the discussion in the Introduction).

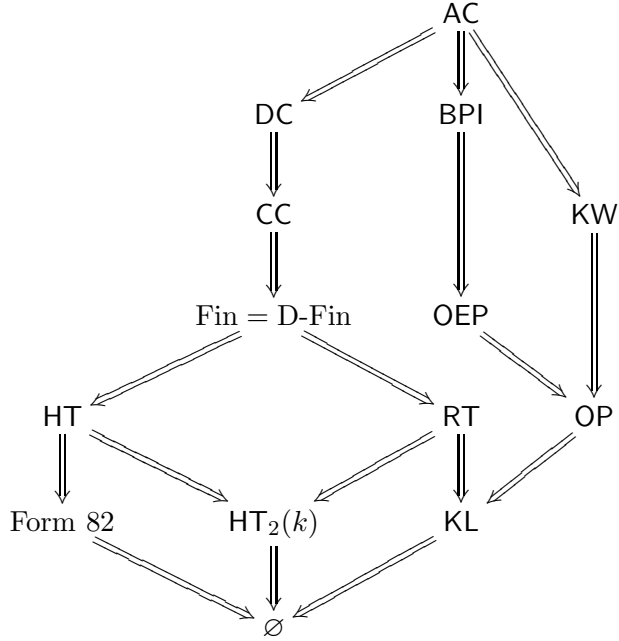
\begin{figure}
\begin{center}
\begin{tabular}{c}
\xymatrix{
 & & \ac\ar@{=>}[dl]\ar@{=>}[ddr]\ar@{=>}[d] & \\
 & \dc\ar@{=>}[d] & \bpi\ar@{=>}[dd] & \\
 & \cc\ar@{=>}[d] & & \kw\ar@{=>}[dd] \\
 & \fdf\ar@{=>}[dl]\ar@{=>}[dr] & \oep\ar@{=>}[dr] & \\
\hind\ar@{=>}[dr]\ar@{=>}[d] & & \rt\ar@{=>}[d]\ar@{=>}[dl] & \op\ar@{=>}[dl] \\
\text{Form 82}\ar@{=>}[dr] & \hind_2(k)\ar@{=>}[d] & \kl\ar@{=>}[dl] & \\
 & \varnothing & & 
}
\end{tabular}
\end{center}
\caption{Enhanced diagram of implication relations, now including $\hind_2(k)$ among the other classical choice principles.}
\label{fig:enhanced-diagram}
\end{figure}

\begin{theorem}\label{the-n-satisfy-ht2}
The models $\mathcal N_1$, $\mathcal N_2$ and $\mathcal N_3$ all satisfy $\hind_2(k)$ for all $k\geq 2$.
\end{theorem}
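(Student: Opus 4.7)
The plan is to reduce each of the three models to a stronger principle already established in the paper. Two basic facts do the heavy lifting: $\hind \Rightarrow \hind_2(k)$ (since any pairwise disjoint family $Y$ witnessing $\hind(k)$---available via Proposition~\ref{colourblind-hindman}---satisfies $\fs_{\leq 2}(Y) \subseteq \fu(Y)$ and hence inherits $c$-monochromaticity), and $\rt \Rightarrow \hind_2(k)$, noted in the discussion immediately preceding the theorem. Armed with these, the cases $\mathcal{N}_2$ and $\mathcal{N}_1$ are immediate: for $\mathcal{N}_2$ I would invoke Theorem~\ref{prop:secondfraenkel} to conclude $\mathcal{N}_2 \vDash \hind$; for $\mathcal{N}_1$ I would recall from the proof of Theorem~\ref{rtindepfromhind} that every infinite set in $\mathcal{N}_1$ is $R^n$-infinite for every $n \geq 2$, so in particular $\mathcal{N}_1 \vDash \rt$.

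The substantive work is in $\mathcal{N}_3$, where I would show directly that every infinite $X \in \mathcal{N}_3$ is $R^2$-infinite. Fix a 2-colouring $c:[X]^2 \to 2$ and choose a common finite support $F \subseteq A$ for $X$ and $c$. If every element of $X$ is fixed by $\fix(F)$, then any ground-model well-ordering of $X$ is automatically $\fix(F)$-invariant (as $\fix(F)$ acts as the identity on $X \times X$) and hence belongs to $\mathcal{N}_3$; thus $X$ is well-orderable and therefore Dedekind-infinite, and the $\zf$-provable infinite Ramsey theorem applied to an embedded copy of $\omega$ yields the monochromatic set. Otherwise, pick $x_0 \in X$ with a support $F_{x_0}$ that is minimal among supports of $x_0$ containing $F$, so $F_{x_0} \supsetneq F$; choose $a_0 \in F_{x_0} \setminus F$ and set $F' := F_{x_0} \setminus \{a_0\}$. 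The orbit $Y := \{\pi(x_0) : \pi \in \fix(F')\}$ is supported by $F'$ (hence lies in $\mathcal{N}_3$) and is contained in $X$. The minimality of $F_{x_0}$, together with the order-preserving nature of the group, implies $\pi(x_0) = \rho(x_0) \iff \pi(a_0) = \rho(a_0)$, so $Y$ is in bijection with the $\fix(F')$-orbit of $a_0$, which is an entire interval of $A \setminus F'$ and is therefore infinite. Finally, since the dense linear order on $A$ makes $\fix(F')$ act transitively on unordered pairs from any sub-interval of $A \setminus F'$, the pairs in $[Y]^2$ form a single $\fix(F')$-orbit, on which the $\fix(F')$-invariant colouring $c$ is constant.

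The main obstacle will be the orbit analysis for $\mathcal{N}_3$, specifically verifying that $[Y]^2$ constitutes a single $\fix(F')$-orbit when $x_0$ is an arbitrary hereditarily symmetric set. I will need to confirm both that the $\fix(F')$-action on $Y$ factors cleanly through the action on the ``free parameter'' $a_0$ (which leans on the minimum-support choice of $F_{x_0}$ and on the fact that $\aut(\mathbb{Q}, \leq)$ has only trivial setwise stabilisers of finite subsets of $A$) and that the induced pair-action on $[Y]^2$ is transitive (which uses the density of the order on $A$).
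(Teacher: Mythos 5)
Your handling of $\mathcal N_1$ and $\mathcal N_2$ is exactly the paper's: both reduce to $(\hind\vee\rt)\Rightarrow\hind_2(k)$, with $\mathcal N_2\vDash\hind$ from Theorem~\ref{prop:secondfraenkel} and $\mathcal N_1\vDash\rt$ coming from Blass's lemma (which is what the proof of Theorem~\ref{rtindepfromhind} rests on; the paper cites \cite{blass-rt} directly). The genuine difference is $\mathcal N_3$: the paper simply quotes Tachtsis's theorem \cite[Theorem 2.4]{tachtsis-chainsantichains} that $\rt$ holds in Mostowski's model, whereas you prove that fact directly by an orbit analysis (well-orderable case via the $\zf$-provable Ramsey theorem on an embedded copy of $\omega$; otherwise an orbit $Y$ of some $x_0$ under $\fix(F')$, in bijection with an interval of atoms, on which $[Y]^2$ is a single orbit of the group and hence $c$-monochromatic). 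Your route buys self-containedness at the cost of redoing a special case of Tachtsis's argument; its skeleton is sound, and in particular the two-transitivity step is fine: it only needs the easy direction $\pi(a_0)=\rho(a_0)\Rightarrow\pi(x_0)=\rho(x_0)$ together with the homogeneity of the dense order inside the interval.

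The one place where your justification is insufficient as stated is the forward direction $\pi(x_0)=\rho(x_0)\Rightarrow\pi(a_0)=\rho(a_0)$, which is precisely what makes the map $\pi(a_0)\mapsto\pi(x_0)$ injective and hence makes $Y$ infinite rather than a finite orbit. Minimality of $F_{x_0}$ among supports of $x_0$ containing $F$, plus order-preservation, does not by itself give this: from $\sigma\in\fix(F')$ with $\sigma(x_0)=x_0$ you can only conclude that $\sigma[F_{x_0}]$ is again a support of $x_0$, not that it equals $F_{x_0}$. What is needed is the standard structure lemma for Mostowski's model that every element has a \emph{least} support (equivalently, that the intersection of two supports is a support); granted that, $\sigma(x_0)=x_0$ forces $\sigma$ to map the least support of $x_0$ onto itself, hence (being an order-automorphism restricted to a finite linearly ordered set) to fix it pointwise, and $a_0$ lies in that least support because the minimal support containing $F$ is the least support united with $F$. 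This lemma is proved in \cite[Section 4.5]{jech-choice}, so the gap is a citation away, but it should be invoked explicitly rather than attributed to minimality alone.
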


\begin{proof}
Since $(\hind\vee\rt)\Rightarrow\hind_2(k)$, the statement is immediate from the fact that $\mathcal N_1\vDash\rt$ (by~\cite[Theorem 2]{blass-rt}), $\mathcal N_2\vDash\hind$ (Theorem~\ref{prop:secondfraenkel}), and $\mathcal N_3\vDash\rt$ (this is a theorem of Tachtsis~\cite[Theorem 2.4]{tachtsis-chainsantichains}).
\end{proof}

\begin{corollary}
In $\zf$, $\hind_2(k)$ does not imply $\hind$ or any of the $\rt^n$ (and {\em a fortiori}, it does not imply Form 325 either), for any $k\geq 2$.
\end{corollary}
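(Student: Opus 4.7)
The plan is to leverage Theorem~\ref{the-n-satisfy-ht2} together with known failures of $\hind$ and the $\rt^n$ in the same permutation models, and then invoke the transfer machinery (Theorems~\ref{finiteness-classes-transfer} and~\ref{pincus-transfer-theorem}) to move from $\zfa$ to $\zf$. The two non-implications are independent witnesses, so I would handle them separately.

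For $\hind_2(k) \not\Rightarrow \hind$, my plan is to use either the First Fraenkel Model $\mathcal N_1$ or Mostowski's Linearly Ordered Model $\mathcal N_3$. By Theorem~\ref{the-n-satisfy-ht2}, both of these satisfy $\hind_2(k)$, while by Theorems~\ref{prop:firstfraenkel} and~\ref{prop:mostowskimodel} respectively, $\hind$ fails in each. Since the class of H-finite sets and the class of H$_2(k)$-finite sets are both tame finiteness classes, Theorem~\ref{finiteness-classes-transfer} tells us that $\hind_2(k)$, $\neg\hind$, and hence their conjunction, are injectively boundable statements. Theorem~\ref{pincus-transfer-theorem} then supplies a $\zf$ model of $\hind_2(k) \wedge \neg\hind$.

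For $\hind_2(k) \not\Rightarrow \rt^n$ (for each fixed $n \geq 2$), I would use the Second Fraenkel Model $\mathcal N_2$. By Theorem~\ref{the-n-satisfy-ht2}, $\mathcal N_2 \vDash \hind_2(k)$, and by the argument used in Theorem~\ref{rtindepfromhind} (invoking~\cite[Proposition 4.7]{brot-cao-fernandez}), the set of atoms is R$^2$-finite, so $\rt$ fails and hence every $\rt^n$ ($n \geq 2$) fails in $\mathcal N_2$. The statement $\hind_2(k) \wedge \neg\rt^n$ is a conjunction of injectively boundable statements (by tameness of the relevant finiteness classes and Theorem~\ref{finiteness-classes-transfer}), so Theorem~\ref{pincus-transfer-theorem} again transfers the failure of the implication to $\zf$.

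For the ``a fortiori'' claim about Form 325, I would simply observe that Form 325 implies each individual $\rt^n$, so the previous paragraph already yields $\hind_2(k) \not\Rightarrow \text{Form 325}$; alternatively, one can argue directly in $\mathcal N_2$ using the boundability observation from the footnote of Theorem~\ref{rtindepfromhind} to confirm that $\hind_2(k) \wedge \neg\text{Form 325}$ is itself injectively boundable, and then transfer. There is no genuine obstacle here: all the heavy lifting was already done in Theorem~\ref{the-n-satisfy-ht2} (to ensure $\hind_2(k)$ holds in these models) and in the preceding independence results (to ensure $\hind$ and $\rt^n$ fail). The only point requiring a moment of care is verifying that all the relevant statements are injectively boundable so that Theorem~\ref{pincus-transfer-theorem} applies, but this follows uniformly from the tameness of the finiteness classes associated with ``H-finite'', ``H$_2(k)$-finite'', and ``R$^n$-finite''.
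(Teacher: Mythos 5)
Your proposal is correct and follows essentially the same route as the paper: $\mathcal N_1$ or $\mathcal N_3$ (via Theorem~\ref{the-n-satisfy-ht2} plus Theorems~\ref{prop:firstfraenkel} and~\ref{prop:mostowskimodel}) for $\hind_2(k)\not\Rightarrow\hind$, the Second Fraenkel Model $\mathcal N_2$ for $\hind_2(k)\not\Rightarrow\rt^n$, and transfer via injective boundability of the conjunctions of tame-finiteness-class statements. The ``a fortiori'' handling of Form 325 also matches the paper's reasoning.
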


\begin{proof}
Either of the models $\mathcal N_1$ and $\mathcal N_3$ satisfy $\hind_2(k)$ by Theorem~\ref{the-n-satisfy-ht2}, while neither of them satisfies $\hind$, by Theorems~\ref{prop:firstfraenkel} and~\ref{prop:mostowskimodel}. Thus $\hind_2(k)$ does not imply $\hind$ (in $\zfa$, but also in $\zf$ since $\hind_2(k)\wedge\neg\hind$ is an injectively boundable statement), for any $k\geq 2$.

On the other hand, the model $\mathcal N_2$ satisfies $\hind_2(k)$ but it does not satisfy any of the $\rt^n$ (as argued in the proof of Theorem~\ref{rtindepfromhind}), and so $\hind_2(k)$ does not imply any of the $\rt^n$ (again, originally in $\zfa$ but it follows that this works also in $\zf$ since $\hind_2(k)\wedge\neg\rt$ is an injectively boundable statement), for any $k\geq 2$.
\end{proof}

The choice principle $\hind_2(k)$ is thus strictly weaker than both $\hind$, and all of the $\rt^n$, for any $k\geq 2$. It follows immediately that $\hind_2(k)$ is also strictly weaker than (meaning that it is implied by, with the implication not reversible) all of $\dc$, $\cc$, and $\fdf$. The following theorem will allow us to establish the lack of implication relations between $\hind_2(k)$ and a host of other choice principles.

\begin{theorem}\label{ht2-in-basic-cohen}
In the basic Cohen model $\mathcal M_1$, $\hind_2(2)$ fails (and hence so does $\hind(k)$, for any $k\geq 2$).
\end{theorem}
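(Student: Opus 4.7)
The parenthetical is immediate: every $k$-colouring is also a $(k+1)$-colouring, so $\hind_2(k+1)\Rightarrow\hind_2(k)$ for each $k\geq 2$, and it therefore suffices to show that $\hind_2(2)$ itself fails in $\mathcal M_1$. The plan is to witness this by exhibiting an infinite set $X\in\mathcal M_1$ that is H$_2(2)$-finite; the natural candidate is the same set $X$ used in the proof of Theorem~\ref{prop:basiccohen}, namely the countably many generic Cohen reals used to construct $\mathcal M_1$. We already know that $X$ is Dedekind-finite \cite[Lemma~5.15]{jech-choice}, linearly orderable (as $X\subseteq\mathbb R$), and H-finite; the last conclusion, combined with Proposition~\ref{equiv-hindman}, gives that $[X]^{<\omega}$ is itself Dedekind-finite in $\mathcal M_1$. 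The task is then to strengthen H-finite to H$_2(2)$-finite for $X$, by producing an explicit $2$-colouring $c\colon[X]^{<\omega}\longrightarrow 2$ with no infinite $c$-monochromatic $\fs_{\leq 2}(Y)$.

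The plan for constructing $c$ is to combine two layers. The first is the parity layer: because $|F_1\bigtriangleup F_2|\equiv|F_1|+|F_2|\pmod 2$, including $|F|\bmod 2$ as part of $c$ automatically rules out any monochromatic $\fs_{\leq 2}(Y)$ made up of odd-cardinality sets, so it forces any such $Y$ to consist of even-cardinality elements. The second is an order-theoretic layer based on the linear order $\leq$ on $X$ (together with auxiliary structure available in $\mathcal M_1$, such as the ultrafilters guaranteed by $\bpi$ or the Kinna--Wagner selector $X\hookrightarrow\wp(\alpha)$), designed so that the two canonical even-cardinality monochromatic configurations are ruled out: the uniform-cardinality family $[X]^2$ (whose pairwise symmetric differences stay within $[X]^{\leq 4}$), and the sunflower family $\{F_0\cup\{x\}:x\in S\}$ through a common stem (whose pairwise symmetric differences collapse to pairs). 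If this second layer is designed correctly, every infinite monochromatic $Y$ would force an injection $\omega\hookrightarrow Y\subseteq[X]^{<\omega}$, contradicting the Dedekind-finiteness of $[X]^{<\omega}$ in $\mathcal M_1$ recorded above.

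The main obstacle is precisely this second layer. Unlike the $\fu$-case of Proposition~\ref{linearly-ordered-plus-hindman}, where $Y$ can be thinned to a pairwise-disjoint subfamily and then handled by $\min_{\leq}$, the $\fs_{\leq 2}$-case controls only pairwise symmetric differences, so the colouring itself must do the work of precluding the bad configurations listed above. Moreover, Theorem~\ref{the-n-satisfy-ht2} tells us that the atoms of $\mathcal N_3$ are linearly orderable, Dedekind-finite, and H$_2(2)$-infinite, so the construction cannot rest on linear orderability and Dedekind-finiteness of $X$ alone; it must genuinely exploit an $\mathcal M_1$-specific feature, most naturally the ultrafilter on $X$ granted by $\bpi$ or the Kinna--Wagner selector, both of which hold in $\mathcal M_1$. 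Once such a $c$ is produced, $X$ is H$_2(2)$-finite in $\mathcal M_1$, whence $\mathcal M_1\vDash\neg\hind_2(2)$, and consequently $\mathcal M_1\vDash\neg\hind_2(k)$ for every $k\geq 2$.
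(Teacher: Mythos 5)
There is a genuine gap: the heart of the theorem is the construction of a specific colouring $c:[X]^{<\omega}\longrightarrow 2$ admitting no infinite $Y$ with $\fs_{\leq 2}(Y)$ monochromatic, and your proposal never produces one. Everything after the correct reduction to ``exhibit an H$_2(2)$-finite set in $\mathcal M_1$'' is conditional (``if this second layer is designed correctly\dots'', ``once such a $c$ is produced\dots''), so the essential step is left open. Moreover, the mechanism you propose for the contradiction is not in itself sufficient: an infinite monochromatic $Y\subseteq[X]^{<\omega}$ does not contradict the Dedekind-finiteness of $[X]^{<\omega}$ (infinite Dedekind-finite sets have plenty of infinite subsets), so you would additionally have to show that your colouring lets you extract a countable injective sequence from any such $Y$, and no indication is given of how the order/ultrafilter/Kinna--Wagner ``layer'' would accomplish that. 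Your observation about $\mathcal N_3$ is a good sanity check, but it only tells you what cannot work; it does not supply what does.

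The paper's proof works quite differently, and not with structure internal to $\mathcal M_1$ such as $\bpi$ or $\kw$: it exploits the genericity of the Cohen reals themselves together with the symmetry (finite-support) apparatus of the model. One colours $F\in[X]^{<\omega}$ by whether some two distinct reals $x_n,x_m\in F$ satisfy that $\min(x_n\bigtriangleup x_m)$ is even. Given an infinite $Y$ with $\fs_{\leq 2}(Y)$ monochromatic, take a name $\mathring Y$ with finite support $E$, find $F\in Y$ containing some $x_n$ with $n\notin E$, and let $\pi$ transpose $n$ with a fresh index $m$; then $p$ and $\pi(p)$ are compatible and force $F\bigtriangleup\pi(F)=\{x_n,x_m\}\in\fs_{\leq 2}(Y)$, while the parity of $\min(x_n\bigtriangleup x_m)$ is still undecided, so one extension forces the monochromatic colour to be $1$ and another forces it to be $0$ --- a direct contradiction, with no appeal to Dedekind-finiteness of $[X]^{<\omega}$ at all. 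In short, your parity-of-cardinality layer is harmless but beside the point, and the decisive idea (a colouring read off from where two generic reals first differ, killed by a name-and-permutation argument) is missing from the proposal.
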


\begin{proof}
Let us fix some notation: let $\mathbb P$ be the forcing notion where conditions are finite functions $p:F\times n\longrightarrow 2$ for some finite $F\subseteq\omega$, $n<\omega$. Given a permutation $\pi\in\sym(\omega)$, we also use the letter $\pi$ to denote the automorphism $\pi:\mathbb P\longrightarrow\mathbb P$ induced by permuting the columns of $\omega\times\omega$ according to $\pi$. Let $X=\{x_n\big|n<\omega\}$ be the set of Cohen reals (thought of as elements of $2^\omega$) added by $\mathbb P$ ($x_n$ represents the $n$-th column of the generic function $:\omega\times\omega\longrightarrow 2$). (Of course, the enumeration $\langle x_n\big|n<\omega\rangle$ does not belong to $\mathcal M_1$ even though the set $\{x_n\big|n<\omega\}$ does.)

Define the colouring $c:[X]^{<\omega}\longrightarrow 2$ given by $c(F)=1$ if and only if there are distinct $x_n,x_m\in F$ such that $\min(x_n\bigtriangleup x_m)$ is even. We claim that $c$ witnesses that $\hind_2(2)$ fails. To see this, suppose, aiming for a contradiction, that $Y\subseteq[X]^{<\omega}$ is an infinite set such that $\fs_{\leq 2}(Y)$ is $c$-monochromatic. Let $E\subseteq\omega$ be a finite ``support'' for the set $Y$; in other words, let $\mathring{Y}$ be a $\mathbb P$-name for $Y$ such that every permutation $\pi$ fixing each element of $E$, satisfies that $\pi(\mathring{Y})=\mathring{Y}$. Since $Y$ is infinite, there is an $F\in Y$ such that $F\not\subseteq\{x_k\big|k\in E\}$. Let $n<\omega$, $n\notin E$, and $p\in\mathbb P$ be such that $p\Vdash``\mathring{x_n}\in\mathring{F}\in\mathring{Y}"$. Take an $m<\omega$ such that $m\notin\dom(\dom(p))\cup E$, and let $\pi$ be the transposition of $n$ and $m$. Then $\pi(p)\Vdash``\mathring{x_m}\in\pi(\mathring{F})\in\mathring{Y}"$; furthermore, $p$ and $\pi(p)$ are compatible conditions. Thus $p\cup\pi(p)\Vdash``\{\mathring{x_n},\mathring{x­_m}\}=\mathring{F}\bigtriangleup\pi(\mathring{F})\in\fs_{\leq 2}(\mathring Y)"$, and $p\cup\pi(p)$ also forces that $x_n$ agrees with $x_m$ up to $\ran(\dom(p))$. Let $q$ be an extension of $p\cup\pi(p)$ deciding that $\min(\mathring{x_n}\bigtriangleup\mathring{x_m})$ is even; this shows that the colour of $\fs_{\leq 2}(Y)$ is $1$. However, one can run the exact same argument up to the moment where one chooses $q$, at which time let us pick $q$ forcing that $\min(\mathring{x_n}\bigtriangleup\mathring{x_m})$ is odd. This implies that the colour of $\fs_{\leq 2}(Y)$ is also $0$, a contradiction.
\end{proof}

\begin{corollary}
Given a fixed $k\geq 2$, there is no $\zf$-provable implication relation between $\hind_2(k)$ and neither of $\bpi$, $\oep$, $\kw$, $\op$, or $\kl$.
\end{corollary}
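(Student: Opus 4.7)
The plan is to mirror the proof of Theorem~\ref{ht-and-bpi-co} almost verbatim, using Cohen's basic model in one direction and the Second Fraenkel Model in the other, after observing that the relevant statements are injectively boundable and hence transferable.

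For the direction ``none of $\bpi,\oep,\kw,\op,\kl$ implies $\hind_2(k)$'', I would work directly in Cohen's basic model $\mathcal M_1$, which is already a model of $\zf$ (so no transfer is needed). By the classical results of Halpern--L\'evy cited in the paper, $\mathcal M_1\vDash\bpi\wedge\kw$, and since $\bpi\Rightarrow\oep\Rightarrow\op\Rightarrow\kl$ and $\kw\Rightarrow\op$, all five of the principles hold in $\mathcal M_1$. On the other hand, Theorem~\ref{ht2-in-basic-cohen} gives $\mathcal M_1\vDash\neg\hind_2(2)$; since $\hind_2(k)\Rightarrow\hind_2(2)$ for any $k\geq 2$ (a trivial contrapositive of the observation $\hind_2(k+1)\Rightarrow\hind_2(k)$ noted just before the definition of H$_2(k)$-finiteness), we conclude that $\mathcal M_1\vDash\neg\hind_2(k)$ for every $k\geq 2$, which finishes this direction.

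For the converse direction, the plan is to use the Second Fraenkel Model $\mathcal N_2$. By Theorem~\ref{the-n-satisfy-ht2}, $\mathcal N_2\vDash\hind_2(k)$, and since the set $A$ of atoms of $\mathcal N_2$ is Dedekind-finite (so in particular $\fdf$ fails in $\mathcal N_2$), Proposition~\ref{konigplushindman} guarantees that $\mathcal N_2\vDash\neg\kl$ (alternatively, the canonical sequence of pairs of atoms is a countable family of nonempty finite sets without a choice function). To move this from $\zfa$ to $\zf$, observe that $\hind_2(k)$ is injectively boundable (as explicitly noted in the paragraph introducing $\hind_2(k)$), and that $\neg\kl$ is a boundable, hence injectively boundable, statement (the argument is already given in the proof of Theorem~\ref{ht-and-bpi-co}: the formula ``$x$ is an infinite, finitely branching tree without infinite branches'' is equivalent to its relativization to $\wp^{\omega+1}(x)$). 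Thus the conjunction $\hind_2(k)\wedge\neg\kl$ is a conjunction of injectively boundable statements, so Theorem~\ref{pincus-transfer-theorem} yields a $\zf$-model witnessing that $\hind_2(k)$ does not imply $\kl$. Since each of $\bpi$, $\oep$, $\op$, $\kw$ implies $\kl$ (via $\bpi\Rightarrow\oep\Rightarrow\op\Rightarrow\kl$ and $\kw\Rightarrow\op\Rightarrow\kl$), the same model shows that $\hind_2(k)$ implies none of these principles in $\zf$ either.

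There is no real obstacle here: the genuinely hard work has already been done, namely the forcing argument of Theorem~\ref{ht2-in-basic-cohen} and the Ramsey-theoretic facts about $\mathcal N_2$ used in Theorem~\ref{the-n-satisfy-ht2}. The only thing to be careful about is that the transfer-theorem bookkeeping goes through, i.e.\ that $\hind_2(k)\wedge\neg\kl$ (and not merely the individual conjuncts) is injectively boundable, which follows because the class of injectively boundable statements is closed under finite conjunction.
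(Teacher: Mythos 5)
Your proof is correct and follows essentially the same route as the paper: the failure of $\hind_2(k)$ in Cohen's model $\mathcal M_1$ (Theorem~\ref{ht2-in-basic-cohen}) handles one direction, and the Second Fraenkel Model handles the other. The only cosmetic difference is that for the second direction the paper simply cites the implication $\hind\Rightarrow\hind_2(k)$ together with the already-established fact that $\hind$ implies none of these principles (Theorem~\ref{ht-and-bpi-co}), whereas you re-run the $\mathcal N_2$ transfer argument with $\hind_2(k)\wedge\neg\kl$ directly; both amount to the same models and the same transfer bookkeeping.
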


\begin{proof}
$\hind_2(k)$ cannot imply neither of the choice principles mentioned in the statement of the theorem, since $\hind$ does not imply them either. On the other hand, all of these choice principles hold in $\mathcal M_1$ (as mentioned in the proof of Theorem~\ref{ht-and-bpi-co}), while $\hind_2(k)$ fails in this model because of Theorem~\ref{ht2-in-basic-cohen}. Hence, neither of the principles mentioned in the statement of the theorem implies $\hind_2(k)$.
\end{proof}

It remains to establish whether there are any implication relations between $\hind_2(k)$ and Form 82. In order to do this, the following theorem will be instrumental.

\begin{theorem}\label{ht2-in-modelo-nopal}
In the model $M(A,\mathscr F,\mathscr G)$ from Sec.~\ref{ht-vs-form82}, $\hind_2(2)$ fails (and hence so does each of the $\hind_2(k)$ for $k>2$).
\end{theorem}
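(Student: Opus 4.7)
The plan is to mimic the forcing-style argument of Theorem~\ref{ht2-in-basic-cohen}, replacing the Cohen-forcing automorphisms by the isometries of $\omega^\omega$ that already generate $\mathscr G$. The crucial observation is that isometries preserve the quantity $\Delta$, so the parity of $\Delta(f,g)$ is a $\mathscr G$-invariant statistic on pairs of atoms.

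I would define a colouring $c : [A]^{<\omega} \longrightarrow 2$ by
\[
c(F) = \begin{cases}
\min\{\Delta(f,g) : a_f, a_g \in F,\ a_f \neq a_g\} \bmod 2 & \text{if } |F| \geq 2, \\
0 & \text{if } |F| \leq 1.
\end{cases}
\]
Since every $\pi_\varphi \in \mathscr G$ is induced by an isometry and isometries preserve $\Delta$, the colouring $c$ is fixed by all of $\mathscr G$, hence belongs to $M(A, \mathscr F, \mathscr G)$. I claim that $c$ witnesses the failure of $\hind_2(2)$ at $A$.

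Assume, towards a contradiction, that $Y \subseteq [A]^{<\omega}$ is infinite in the model with $\fs_{\leq 2}(Y)$ being $c$-monochromatic, and let $G_{n,E}$ be a support of $Y$. Because $[\{a_e : e \in E\}]^{<\omega}$ is finite while $Y$ is not, there exist $F \in Y$ and $a_{f_0} \in F$ with $f_0 \notin E$. Pick $L \geq n$ large enough that $U_{f_0 \restriction L}$ is disjoint from $E$ and from $\{g \in \omega^\omega : a_g \in F \setminus \{a_{f_0}\}\}$. For each $\ell \geq L$, choose $f_\ell \in U_{f_0 \restriction \ell}$ with $f_\ell(\ell) \neq f_0(\ell)$, so that $\Delta(f_0, f_\ell) = \ell$, and build an isometry $\varphi_\ell$ of $\omega^\omega$ that is the identity outside $U_{f_0 \restriction L}$ and sends $f_0$ to $f_\ell$. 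The choice of $L$ forces $\varphi_\ell$ to fix every $e \in E$ and every atom of $F$ other than $a_{f_0}$, and the containment $U_{f_0 \restriction L} \subseteq U_{f_0 \restriction n}$ forces $\varphi_\ell[U_s] = U_s$ for each $s \in \omega^n$; therefore $\pi_{\varphi_\ell} \in G_{n,E}$. Thus $\pi_{\varphi_\ell}(F) \in Y$, and by construction $F \bigtriangleup \pi_{\varphi_\ell}(F) = \{a_{f_0}, a_{f_\ell}\} \in \fs_{\leq 2}(Y)$, with colour $\ell \bmod 2$. Taking $\ell = L$ and $\ell = L+1$ produces elements of $\fs_{\leq 2}(Y)$ of opposite colours, the desired contradiction.

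The main obstacle is the isometry construction in the key step: one has to verify that an assembly of level-permutations $\langle (\varphi_\ell)_s : s \in \omega^{<\omega}\rangle$ can be chosen so that $(\varphi_\ell)_s$ is the identity for every $s$ that is not a proper extension of $f_0 \restriction L$, while still realising the branch-move $f_0 \mapsto f_\ell$. This reduces to a routine recursive construction within the subtree rooted at $f_0 \restriction L$: at each level of that subtree one selects a finite-support permutation of $\omega$ that redirects the current prefix of $f_0$ to the corresponding prefix of $f_\ell$, and the identity is used at every node outside this subtree. Once this is in hand, the rest of the argument proceeds as sketched above.
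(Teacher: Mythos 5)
Your proposal is correct and follows essentially the same route as the paper: the paper also colours a finite set of atoms by the parity of $\Delta$ (phrased via the metric $d$, with colour $1$ iff some pair has $1/d(a_f,a_g)$ even), and likewise uses an isometry in $G_{n,E}$ that moves a single atom $a_f\in F\in Y$ within a small ball fixing the support and the rest of $F$, producing doubletons $F\bigtriangleup\pi(F)\in\fs_{\leq 2}(Y)$ of both parities. Your parametrization by $\ell=L,L+1$ and the explicit assembly construction are just a slightly more detailed packaging of the same argument.
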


\begin{proof}
Notice that the metric on $A$ induced by the bijection $f\longrightarrow a_f$ from $\omega^\omega$ to $A$ belongs to $M(A,\mathscr F,\mathscr G)$, since it is supported by the empty set; use the letter $d$ to denote that metric. Then, for any two atoms $a_f,a_g$, the number $d(a_f,a_g)$ is the reciprocal of an integer. We can therefore define, within $M(A,\mathscr F,\mathscr G)$, the colouring $c:[A]^{<\omega}\longrightarrow 2$ given by $c(F)=1$ if and only if there are two distinct $a_f,a_g\in F$ such that $1/d(a_f,a_g)$ is even. We claim that the colouring $c$ witnesses the failure of $\hind_2(2)$ in $M(A,\mathscr F,\mathscr G)$.

To see this, suppose that $Y\subseteq[X]^{<\omega}$ is an infinite, pairwise disjoint family such that $\fs_{\leq 2}(Y)$ is monochromatic, and let $E\subseteq\omega^\omega$ be a finite set, and $n<\omega$, such that every $\pi\in G_{n,E}$ fixes $Y$. Since $Y$ is infinite, we can find an $F\in Y$ such that there is $a_f\in F\setminus\{a_h\big|h\in E\}$. Find a $k>\max\{n\}\cup\{\Delta(g,h)\big|g,h\in E\cup\{f\}\cup\{h\big|a_h\in F\}\}$ and a $g\in\omega^\omega\setminus E$ such that $g\upharpoonright k=f\upharpoonright k$, $g\notin E$, $a_g\notin F$, and $f$ differs from $g$ for the first time at the odd number $m>k$. Letting $\pi$ be induced by an isometry of $\omega^\omega$ in such a way that $\pi$ fixes all elements of $(F\setminus\{a_f\})\cup\{a_h\big|h\in E\}$  and $\pi(a_f)=a_g$, we obtain that $\pi(F)=F\cup\{a_g\}\setminus\{a_f\}\in Y$ and therefore $\{a_f,a_g\}=F\bigtriangleup\pi(F)\in\fs_{\leq 2}(Y)$, with $1/d(a_f,a_g)=m+1$ an even number. This shows that the color in which $\fs_{\leq 2}(Y)$ is monochromatic must be $1$; however, running the exact same argument but choosing $g$ in such a way that $m=\Delta(f,g)$ is even shows that this colour must be $0$ as well, a contradiction.
\end{proof}

\begin{corollary}
In $\zf$, there is no provable implication between $\hind_2(k)$ and Form 82, for any $k\geq 2$.
\end{corollary}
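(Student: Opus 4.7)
The plan is to prove the two directions of non-implication separately by exhibiting appropriate permutation models, then invoking the transfer machinery. Since the class of H$_2(k)$-finite sets is a tame finiteness class (as already noted in the paper), both $\hind_2(k)$ and $\neg\hind_2(k)$ are injectively boundable by Theorem~\ref{finiteness-classes-transfer}. Likewise, Form 82 is equivalent to the equality of two tame finiteness classes (the C-finite sets and the finite sets), so both Form 82 and its negation are injectively boundable. Hence the conjunctions $\hind_2(k)\wedge\neg\text{Form 82}$ and $\text{Form 82}\wedge\neg\hind_2(k)$ are injectively boundable, and by Theorem~\ref{pincus-transfer-theorem} it suffices to exhibit each in a model of $\zfa$.

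For the direction $\hind_2(k)\not\Rightarrow\text{Form 82}$, I would use the First Fraenkel Model $\mathcal N_1$. By Theorem~\ref{the-n-satisfy-ht2} we already know that $\mathcal N_1\vDash\hind_2(k)$ for every $k\geq 2$. On the other hand, the set $A$ of atoms is amorphous in $\mathcal N_1$, and Theorem~\ref{prop:firstfraenkel} tells us that $A$ is H-finite. Applying the contrapositive of Proposition~\ref{amorphous-powerset}, since $A$ is amorphous but not H-infinite, $A$ cannot be C-infinite. Equivalently, $\wp(A)$ is Dedekind-finite, and therefore Form 82 fails in $\mathcal N_1$.

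For the direction $\text{Form 82}\not\Rightarrow\hind_2(k)$, I would reuse the permutation model $M(A,\mathscr F,\mathscr G)$ built in Section~\ref{ht-vs-form82}. Theorem~\ref{nopalmodelsatisfies82} shows that this model satisfies Form 82, while Theorem~\ref{ht2-in-modelo-nopal} shows that it falsifies $\hind_2(k)$ for every $k\geq 2$. Applying Theorem~\ref{pincus-transfer-theorem} once more gives a model of $\zf$ with the same properties, completing the argument.

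There is no real obstacle in this proof, as every ingredient has been prepared earlier. The one minor step that requires verification is the failure of Form 82 in $\mathcal N_1$, but this is an immediate consequence of combining the amorphousness of $A$ with Proposition~\ref{amorphous-powerset} and the H-finiteness of $A$ from Theorem~\ref{prop:firstfraenkel}.
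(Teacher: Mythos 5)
Your proposal is correct and follows essentially the same route as the paper: the model $M(A,\mathscr F,\mathscr G)$ of Section~\ref{ht-vs-form82} (via Theorems~\ref{nopalmodelsatisfies82} and~\ref{ht2-in-modelo-nopal}) for Form 82 $\not\Rightarrow\hind_2(k)$, and the First Fraenkel Model with the amorphousness and H-finiteness of $A$ fed into Proposition~\ref{amorphous-powerset} for the converse, with transfer handled by Theorems~\ref{finiteness-classes-transfer} and~\ref{pincus-transfer-theorem}. No gaps to report.
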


\begin{proof}
By Theorem~\ref{ht2-in-modelo-nopal}, the model constructed in Sec.~\ref{ht-vs-form82} satisfies Form 82 together with $\neg\hind_2(k)$. So Form 82 does not imply $\hind_2(k)$, for any $k\geq 2$ (originally in $\zfa$, but these are all statements concerning tame finiteness classes and so they are injectively boundable by Theorem~\ref{finiteness-classes-transfer}).

Conversely, consider the First Fraenkel Model $\mathcal N_1$. We know that $\mathcal N_1\vDash\hind_2(k)$ (cf. Theorem~\ref{the-n-satisfy-ht2}). We claim that Form 82 fails in $\mathcal N_1$. To see this, note that the set of atoms $A$ in $\mathcal N_1$ is amorphous (this is a classical and easy-to-see fact), while at the same time H-finite (by~\cite[Proposition 4.2]{brot-cao-fernandez}). Hence, $A$ must be infinite C-finite, by Proposition~\ref{amorphous-powerset}. In particular, Form 82 fails in $\mathcal N_1$ and so $\hind_2(k)$ does not imply Form 82, for any $k\geq 2$ (once again, in $\zfa$ but the statement is transferable to $\zf$ by Theorem~\ref{finiteness-classes-transfer}).
\end{proof}

We note the curious fact that, in each of the models considered in this section (or in the paper), the principles $\hind_2(k)$ either fail for all $k\geq 2$, or hold for all $k\geq 2$. The argument from Proposition~\ref{colourblind-hindman}, however, does not seem to translate well when the finite sums allowed are limited to a bounded number of summands. Hence, we close the paper with the following natural problem.

\begin{question}
Are all the statements $\hind_2(k)$, for varying $k\geq 2$, equivalent over $\zf$? Does there exist a $\zf$ model satisfying, e.g., $\hind_2(2)$ but not $\hind_2(3)$?
\end{question}

\section*{Acknowledgements}

The author was partially supported by an internal grant from Instituto Polit\'ecnico Nacional ({\em proyecto SIP 20221862}). We are also grateful to the referee for a careful reading of the paper.

\end{document}